\definecolor{dgreen}{rgb}{0,.6,0}
\newtheorem{theorem}{\bf Theorem}[section]
\newtheorem{coro}[theorem]{\bf Corollary}
\newtheorem{defn}[theorem]{\bf Definition}
\newtheorem{remark}[theorem]{\bf Remark}
\newenvironment{proof}{\noindent{\em Proof:}}{\quad \hfill$\Box$\vspace{2ex}}
\def\TT{{\mathbb T}}
\def\ZZ{{\mathbb Z}}
\def\NN{{\mathbb N}}
\def\RR{{\mathbb R}}
\def\RRd{{\mathbb R}^d}
\def\ZZd{{\mathbb Z}^d}
\def\ZZdp{{\mathbb Z}^d_+}
\def\TTd{{\mathbb T}^d}
\def\k2{K^\varphi_2(\TT)}
\def \bK {\Bbb K}
\def \bB {\Bbb B}
\def \bT {\Bbb T}
\def \and {\, \mbox{\rm and}\, }
\newcommand{\Rmnum}[1]{\expandafter\@slowromancap\romannumeral #1@}
\def\TT{{\mathbb T}}
\def\ZZ{{\mathbb Z}}
\def\NN{{\mathbb N}}
\def\RR{{\mathbb R}}
\def\RRd{{\mathbb R}^d}
\def\ZZd{{\mathbb Z}^d}
\def\ZZdp{{\mathbb Z}^d_+}
\def\TTd{{\mathbb T}^d}
\newlength{\fixboxwidth}
\begin{document}
\title{\sffamily Approximation by translates of a single function 
of functions in space induced by the convolution with a given function}
\author{Dinh D\~ung$^a$\footnote{Corresponding author. Email: dinhzung@gmail.com.},
Charles A. Micchelli$^b$ and Vu Nhat Huy$^c$ \\\\
$^a$ Vietnam National University, Information Technology Institute \\
144 Xuan Thuy, Hanoi, Vietnam  \\\\
$^b$Department of Mathematics and Statistics, SUNY Albany \\
Albany, 12222, USA \\\\
$^c$ College of Science,  Vietnam National University\\
334 Nguyen Trai, Thanh Xuan, Ha Noi\\\\
}
\date{\ttfamily  September 26, 2016 --  Version 2.0}
 \tolerance 2500
\maketitle

\begin{abstract}
We study approximation by arbitrary linear combinations of $n$ translates of a single function 
of periodic functions. We construct some methods of
this approximation for functions in a class induced by the convolution with a given function,
and prove upper bounds of  $L_p$-the approximation convergence rate by these methods, 
when $n \to \infty$, for $1 < p < \infty$, and lower bounds of the quantity of best approximation of this class
by arbitrary linear combinations of $n$ translates of arbitrary function, for the particular case $p=2$. 

\medskip
\noindent {\bf Keywords:}\ Function spaces induced by the convolution with a given function;  Reproducing kernel
Hilbert space; Approximation by arbitrary linear combinations of $n$ translates of a single function.  

\medskip
\noindent {\bf Mathematics Subject Classifications: (2010)} 41A46;
41A63; 42A99.
\end{abstract}

%%%%%%%%%%%%%%%%%%%%%%

\section{Introduction}

The purpose of this  paper is to improve and extend the ideas in the
recent papers \cite{1, DC-Err16} on approximation by translates of the
multivariate Korobov function. The motivation for the results given
in \cite{1, DC-Err16}, and those presented here come from Machine Learning,
since certain cases of our results here  relate to approximation of
a function by sections of  a reproducing kernel corresponding to
specific Hilbert space of functions. This relationship to Machine
Learning is described in the papers \cite{1,MXZ06} and is not
reviewed in detail here. Nonetheless, in this regard, we recall that
the observation presented in \cite{MXZ06} provide necessary and
sufficient conditions for sections of a reproducing kernel to be
dense in continuous functions in the corresponding Hilbert space.
This result begs the question of the convergence rate of approximation by
sections of a reproducing kernel.
We refer the reader to \cite{1, DC-Err16} for detailed survey and bibliography
on the problems considered in the present paper. 
Here, in this paper, we introduce
a weighted Hilbert space of multivariate periodic functions and
provide insights into this question. The results presented here also
extend other norms on multivariate periodic functions and these
results are presented separately in this paper. 

We shall begin the study of this problem with a description of the
notation used throughout the paper. In this regard, we merely follow
closely the presentation in \cite{1, DC-Err16}. The $d$-dimensional torus
denoted by $\bT^d$ is the cross product of $d$ copies of the
interval $[0,2\pi]$ with the identification of the end points. When
$d=1$, we simply denote the $d$-torus by $\bT$. Functions on $\bT^d$
are identified with functions on $\RRd$ which are $2\pi$ periodic in
each variable. We shall denote by $L_p(\TTd), \ 1 \le p < \infty$,
the space of integrable functions on $\TTd$ equipped with the norm
\begin{equation}\nonumber
\|f\|_p \ := (2\pi)^{-d/p}\left(\int_{\TTd} |f(\bold{x})|^p
d\bold{x}\right)^{1/p}
\end{equation}
and we shall only consider only real valued functions on $\TTd$. However, all
the results in this paper are true in the complex setting. Also, we
will use the Fourier series of a real valued function in complex
form.

For vectors ${\mathbf x}:=(x_l:l=1,2,\ldots,d)$ and
$\bold{y}:=(y_l:l=1,2,\ldots,d)$ in $\bT^d$ we use
$(\bold{x},\bold{y}):=\sum_{l=1}^{d}x_ly_l$ for the inner product of
$\bold{x}$ with $\bold{y}$.
Given any integrable function $f$ on $\TTd$ and any
lattice vector $\bold{j}=(j_l: l=1,2,\ldots,d) \in \ZZd$, we let
${\hat f}(\bold{j})$ denote the $\bold{j}$-th Fourier coefficient of
$f$ defined by the equation
\[
{\hat f}(\bold{j}) \ := \ (2\pi)^{-d}\int_{\TTd} f(\bold{x}) \,
e^{i(\bold{j},{\mathbf x}) }\, d\bold{x}.
\]
 Frequently, we use the superscript notation $\bB^d$ to  denote
the cross product of $d$ copies of  a given set $\bB$ in $\mathbb{R}^d$.

Let $1 \le p \le \infty$ and $p'$ be defined be the equation $1/p + 1/p' = 1$. 
Assume that $\varphi_{\lambda, d}$ belongs to $L_{p'}(\TTd)$ and can be represented as the Fourier series
\begin{equation} \label{varphi}
\varphi_{\lambda, d} \ = \ \sum_{\bold{j} \in \ZZd}
\lambda_{\mathbf j}^{-1} \, e^{i({\mathbf j},\cdot)}
\end{equation}
in distributional sense for some sequence
$\lambda:= (\lambda_{\bf j}: {\bf j}\in \mathbb{Z}^d)$  with nonzero
components. Notice that if $(\lambda_{\bf j}^{-1}: {\bf j}\in \mathbb{Z}^d)$  is absolutely summable, 
the function $\varphi_{\lambda,d}$ is continuous on $\bT^d$.
In the case that $d=1$ we merely  write $\varphi_\lambda$ for the
univariate function $\varphi_{\lambda,1}$. The special case
when $d=1$, $r \in (0, \infty)$ and $\lambda$ is given by the
equation
\begin{equation} \nonumber
\lambda_j =
\begin{cases}
|j|^r &\text{if }  j \ne 0, \\
1 &\text{if } j=0.
\end{cases}
\end{equation}
In this case, the function $\varphi_{\lambda}$ corresponds to the   Korobov
function which was the focus of study in \cite{1}. In general, we
introduce a subspace of $L_p(\TTd)$ defined as
\[\Phi_{\lambda, p}(\bT^d):=\{f: f =
\varphi_{\lambda,d}*g, \ g \in L_p(\TTd)\}
\]
 with norm
\[
\|f\|_{\Phi_{\lambda, p}(\TT^d)} \ := \|g\|_p
\]
where we denote the convolution of any two functions $f_1$ and $f_2$
on $\bT^d$, as $f_1*f_2$, and as usual, define it at
$\bold{x}\in\bT^d$ by equation
\[
(f_1*f_2)(\bold{x}) \ := (2\pi)^{-d}\int_{\TTd} f_1(\bold{x}) \,
f_2(\bold{x} - \bold{y}) \, d\bold{y},
\]
whenever the integrand is in $L_1(\bT^d)$.

The space $\Phi_{\lambda, 2}(\TT^d)$ is particularly  interesting as it has an
interpretation  in Machine Learning which is described in detail in
the papers \cite{1,MXZ06}. As in the paper \cite{1} we are concerned
with the following concept. Let $\mathbb{W} \subset L_p(\TTd)$ be a
prescribed subset of $L_p(\TTd)$ and $\psi \in L_p(\TTd)$ be a given
function on $\TTd$. Set $\NN_n:= \{1,2,...,n\}$. We are interested in the approximation in
$L_p(\TTd)$-norm of all functions $f \in \mathbb{W}$ by arbitrary
linear combinations of $n$ translates of the function $\psi$, that
is,  by the  functions  in the set $\{ \psi(\cdot - {\mathbf y}_l):
\ {\mathbf y}_l \in \TTd, l\in \mathbb{N}_n \}$ and measure the
error in terms of the quantity
\begin{equation}  \nonumber
M_n(\mathbb{W},\psi)_p
 := \
\sup_{f \in \mathbb{W}}
\ \inf_{c_l\in\mathbb{R},{\mathbf y}_l\in\TTd}
 \biggl\|f - \sum_{l\in \mathbb{N}_n} c_l \psi(\cdot - {\mathbf y}_l)\biggl\|_p.
\end{equation}
The aim of the present paper is to investigate the convergence rate,
when $n\rightarrow\infty$, of  $M_n(U_{\lambda, p}(\bT^d), \psi)_p$,
where $U_{\lambda, p}(\bT^d)$ is the unit ball in $\Phi_{\lambda, p}(\bT^d)$. We shall
also obtain a lower bound for the convergence rate as
$n\rightarrow\infty$ of the quantity
\begin{equation}  \nonumber
M_n(U_{\lambda, 2}(\bT^d))_2  
:= \
\inf_{\psi \in L_2(\TTd)}  M_n(U_{\lambda, 2}(\bT^d),\psi)_2
\end{equation}
which 
gives information about the best choice of $\psi$.

This paper is organized in the following manner. In Section two we
introduce the method of approximation used throughout the paper and
provide error estimates for both the univariate and multivariate
cases. In Section three we apply these results to the problem
described earlier, in particular, of approximating periodic
functions by sections of reproducing kernels. We continue this line
of investigation in Section four by relying upon observations of V.
Maiorov  \cite{Ma05}  as a means to establish lower bounds of approximation.

\section{A linear method of univariate approximation} 
\label{Univariate approximations}

In this section, we  introduce a method of approximation induced by
translates of the function  defined  in equation (\ref{varphi}) in
the univariate case.  We do this in some greater generality.  To the end, we start with the functions
$\varphi_\lambda, \varphi_\beta$ of the form given in equation (\ref{varphi}).
We introduce a trigonometric polynomial  $H_m$  defined at $x \in \TT$  as
\begin{equation}\label{Hm}
H_m(x) =\sum_{|k| \le m}\frac{\beta_k}{\lambda_k} \,e^{ik x}:= \sum_{|k| \le m} \alpha_k \,e^{ik x},
\end{equation}
that is, $\alpha_k:= \frac{\beta_k}{\lambda_k}$ for $|k| \le m$.  
For a function $f \in \Phi_{\lambda, p} (\TT)$ represented as $f= \varphi_\lambda * g$, $g \in L_p(\TT)$, we define the operator
\begin{equation}\label{Qm}
Q_{m}(f):= \ \frac{1}{2m+1}\sum_{l=0}^{2m}V_m(g) (\delta_m l)\varphi_\beta(\cdot - \delta_m l),
\end{equation}
where $\delta_m :=  2\pi/(2m+1)$ and
$
V_m(g):=H_m *g.
$
Our goal is to obtain an estimate for the error of approximating a function  $f \in \Phi_{\lambda, p} (\TT)$ by $Q_{m}(f)$ a linear combination of $2m + 1$ translates of the function $\varphi_\beta$. 
For the moment, we assume that $\varphi_\beta \in L_p(\TT)$ and put 
\[I_{m,j}=\{k: \, k\in \mathbb{Z}, \ (2m+1)j - m \leq k\leq (2m+1)j +m \}.
\]

\subsection{Error estimates for functions in the space $\Phi_{\lambda, 2} (\mathbb{T})$}

\begin{theorem} \label{theorem[error0]}
We put
\begin{equation*}
\varepsilon_m= \max \{ \sup_{|k| > m}|\lambda_k^{-1}| , \sqrt{ \sum_{|k| \in \NN} \Gamma_{m,k}^2}\},
\end{equation*}
where $\gamma_k=\alpha_{k^{'}} \beta_k^{-1}$, $k'$ the unique
integer in $[-m,m]$ such that the number $(k-k')/(2m+1)$ is an
integer, and 
$ \Gamma_{m,j}=\max\{|\gamma_k|: \ k\in I_{m,j}\}$.
Then there exists a positive constant $c$ such that for
all
 $m\in \mathbb{N}$ and $f\in \Phi_{\lambda, 2} (\mathbb{T})$ we have that
\begin{equation}\nonumber
\|f-Q_{m}(f)\|_2 
\ \leq \
c\,\varepsilon_m \|f\|_{\Phi_{\lambda, 2} (\mathbb{T})},
\end{equation} 
and consequently,
\begin{equation}  \nonumber
M_{2m+1}(U_{\lambda, 2}(\bT),\varphi_\beta)_2
\ \leq \
c\,\varepsilon_m. 
\end{equation}
\end{theorem}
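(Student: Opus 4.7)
The plan is to work entirely in Fourier coordinates and exploit the discrete orthogonality of the exponentials on the equispaced grid $\{\delta_m l : l=0,\dots,2m\}$, converting the $L_2$-error into a sum of squared Fourier coefficients via Parseval's identity. The central observation to be extracted is that the specific choice $\alpha_k = \beta_k/\lambda_k$ built into $H_m$ forces $Q_m$ to reproduce the low-frequency part of $f$ exactly, so the error is purely an aliasing error coming from frequencies $|j|>m$.

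Concretely, I would first write $f = \varphi_\lambda * g$ and compute the $j$-th Fourier coefficient $c_j(Q_m(f))$. Using that the Fourier coefficient of $\varphi_\beta(\cdot-a)$ at frequency $j$ is $\beta_j^{-1}$ times a unit-modulus factor depending on $a$, that $V_m(g)$ is a trigonometric polynomial of degree $\le m$ with $k$-th coefficient $\alpha_k c_k(g)$, and the orthogonality relation
\[
\frac{1}{2m+1}\sum_{l=0}^{2m} e^{i(k-j)\delta_m l} \;=\; \mathbf{1}\{k\equiv j\pmod{2m+1}\},
\]
the sum defining $Q_m(f)$ collapses to a single term, producing
\[
c_j(Q_m(f)) \;=\; \alpha_{j'}\beta_j^{-1}\,c_{j'}(g) \;=\; \gamma_j\,c_{j'}(g),
\]
where $j'\in[-m,m]$ is the reduction of $j$ modulo $2m+1$. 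For $|j|\le m$ one has $j'=j$, and the identity $\alpha_j/\beta_j = 1/\lambda_j$ yields $c_j(Q_m(f)) = \lambda_j^{-1}c_j(g) = c_j(f)$; thus only frequencies $|j|>m$ contribute to the error.

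Parseval's identity combined with $|a-b|^2 \le 2|a|^2 + 2|b|^2$ will then split the error into two pieces,
\[
\|f-Q_m(f)\|_2^2 \;\le\; 2\sum_{|j|>m}|\lambda_j^{-1}|^2|c_j(g)|^2 \;+\; 2\sum_{|j|>m}|\gamma_j|^2|c_{j'}(g)|^2.
\]
The first sum is immediately bounded by $(\sup_{|k|>m}|\lambda_k^{-1}|)^2\|g\|_2^2 \le \varepsilon_m^2 \|g\|_2^2$. For the second I would partition $\{|j|>m\}$ into the disjoint blocks $I_{m,n}$, $n\neq 0$, note that on each such block the reduction $j\mapsto j'$ is a bijection onto $[-m,m]$, and bound $|\gamma_j|\le \Gamma_{m,n}$ on $I_{m,n}$. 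Applying $\sum_{|r|\le m}|c_r(g)|^2 \le \|g\|_2^2$ then yields a total of at most $\bigl(\sum_{|n|\in\NN}\Gamma_{m,n}^2\bigr)\|g\|_2^2 \le \varepsilon_m^2 \|g\|_2^2$. The two pieces combine to give the stated bound with $c=2$, and the conclusion about $M_{2m+1}(U_{\lambda,2}(\TT),\varphi_\beta)_2$ is automatic since $Q_m(f)$ is by construction a linear combination of $2m+1$ translates of $\varphi_\beta$. I expect the only genuine hurdle to be bookkeeping: ensuring that the aliased index $j'$ produced by the orthogonality relation is the same $k'$ used in the definition of $\gamma_k$, and handling the Fourier sign conventions together with the reality/symmetry hypotheses on $\lambda$ and $\beta$ consistently throughout.
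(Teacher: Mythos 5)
Your proposal is correct and follows essentially the same route as the paper: both arguments use the discrete orthogonality of the exponentials on the grid $\{\delta_m l\}$ to show that $Q_m$ aliases frequency $j$ to $j'$ with coefficient $\gamma_j\hat g(j')$, that the low frequencies $|j|\le m$ are reproduced exactly because $\alpha_j\beta_j^{-1}=\lambda_j^{-1}$, and then bound the two resulting high-frequency sums via Parseval, the $\sup$ of $|\lambda_k^{-1}|$, and the block decomposition over the sets $I_{m,j}$ with $|\gamma_k|\le\Gamma_{m,j}$. The only cosmetic differences are that the paper routes the computation through the kernel $P_m(x,t)$ and uses the triangle inequality in $L_2$ where you use $|a-b|^2\le 2|a|^2+2|b|^2$; both give $c=2$.
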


\begin{proof}
We define { the kernel $P_m(x,t)$}  for $x,t \in \mathbb{T}$ as
\begin{equation*}
P_m (x,t) := \ \frac{1}{2m+1} \sum\limits_{l=0}^{2m}\varphi_\beta
(x-\delta_m l) H_m (\delta_m l -t)
\end{equation*}
and easily  obtain from   our definition (\ref{Qm})  the  equation
\begin{equation} \nonumber
Q_{m}(f) (x)=\int_{\mathbb{T}} P_m (x,t) g(t)\, dt.
\end{equation}
We now use  equation (\ref{varphi}), the definition of the
trigonometric polynomial $H_m$ given in equation  (\ref{Hm}) and the
easily verified fact,   for $k,s \in \ZZ, s \in [-m,m]$, that
\begin{equation*}
\frac{1}{2m+1}\sum_{l=0}^{2m}  e^{i k(t-\delta_m l)}  e^{is(\delta_m
l -t)}=
\begin{cases}
0,  \quad& \text { if } \ \frac{k-s}{2m+1} \not\in \mathbb{Z}\\[2ex]
e^{i(k-k^{'})t },   \quad & \text{ if } \ \frac{k-s}{2m+1} \in \mathbb{Z}
\end{cases}
\end{equation*}
 to conclude that
\begin{equation} \nonumber
P_m (x,t)= \ \sum_{k\in \mathbb{Z}}  \gamma_k {e^{i ( k-k^{'}) x} }    .
\end{equation}
We again use the formula for the function $\varphi_\lambda$ given in equation (\ref{varphi}) to get that
\begin{equation} \nonumber
P_m(x,t) -\varphi_\lambda(x-t) = \sum_{k\in \mathbb{Z}}   e^{i kx}  (  \gamma_k e^{-ik^{'}t}-\lambda_k^{-1} e^{-ikt}  ).
\end{equation}
For  $|k| \le m$ we have that   $k=k^{'} \leq 2m$ and the above expression becomes
\begin{equation} \nonumber
P_m(x,t) -\varphi_\lambda(x-t) = 
\sum_{|k|> m}  \gamma_k e^{i kx}   e^{-ik^{'}t}- \sum_{|k|> m}\lambda_k^{-1} e^{-ikt}.
\end{equation}
From this equation  and the definition of $f$ we deduce the formula
\begin{equation}\nonumber
Q_{m}(f)(x) - f(x) 
\ = \ 
\sum_{|k|> m}  \gamma_k \hat{g}({k^{'}}) e^{i kx} - \sum_{|k|> m}\lambda_k^{-1} \hat{g}(k) e^{i kx}
 =: A_m(x) - B_m(x).
\end{equation}
By the triangle inequality we have
\begin{equation}\label{[|Q_m(f) -f|_2<]}
\|Q_{m}(f) -f\|_2
\ \le \
\|A_m\|_2 + \|B_m\|_2.
\end{equation}
Parseval's identity  gives us the equation
\begin{equation} \nonumber
\begin{split}
\|A_m\|_2^2
&= \sum_{|k| > m} |\gamma_k|^2  |\hat{g}({k^{'}})|^2 \\
&= 
\sum_{|j| \in \NN } \ \sum_{k\in I_{m,j}} |\gamma_k|^2  |\hat{g}({k- (2m+1)j})|^2 \\
&\leq  
\sum_{|j| \in \NN} \ \sum_{k\in I_{m,j}} |\Gamma_{m,j}|^2  |\hat{g}({k- (2m+1)j})|^2.
\end{split}
\end{equation}
Hence, by our assumption on $\varepsilon_m$  and the inequality
\[
\sum_{k\in I_{m,j}}  |\hat{g}({k- (2m+1)j})|^2 \leq \|g\|_2^2,
\]
 we obtain that
 \begin{equation} \label{eeq5}
\|A_m\|_2^2
\ \leq \  \sum_{|j| \in \NN} \Gamma_{m,j}^2 \|g\|_2^2
\ \leq \ \varepsilon_m^2 \|g\|_2^2.
\end{equation}
Next, by using Parseval's identity again, we have that
\begin{equation*}
\|B_m\|_2^2
=
\sum_{|k| > m}   |\lambda_k^{-2}| |\hat{g}({k})|^2 \leq \sup_{|k|>m}|\lambda_k^{-1}| \sum_{|k| > m}
|\hat{g}({k})|^2 \leq  \varepsilon_m^2 \|g\|_2^2.
\end{equation*}
This, together with \eqref{[|Q_m(f) -f|_2<]} and \eqref{eeq5}, proves the theorem.
\end{proof}

\begin{defn}
The sequence$\{\theta_k: k \in \ZZ\}$ will be called a
nondecreasing-type sequence  if there exists a positive constant $c$ such that $\theta_k \geq c \theta_l$ for all $k,l\in \ZZ$ satisfying the inequality $|k| > |l|$.
\end{defn}

\begin{theorem} \label{Error[d=1,p=2]}
Let  $\Big\{\frac{|\beta_k|}{|\alpha_k|} :k\in \ZZ \Big\}$ and  
$\{|\lambda_k|:k\in \ZZ \}$ be nondecreasing-type sequences. Then there
exists a positive constant $c$ such that  for all
 $m\in \mathbb{N}$ and $f\in \Phi_{\lambda, 2} (\mathbb{T})$ we have that
\begin{equation*}
\|f-Q_{m}(f)\|_2 
\leq 
c\,\|f\|_{\Phi_{\lambda, 2}(\mathbb{T})}\sqrt{ \sum_{|k|\in \mathbb{N}} \lambda_{mk}^{-2}},
\end{equation*}
and consequently,
\begin{equation}  \nonumber
M_{2m+1}(U_{\lambda, 2}(\bT),\varphi_\beta)_2
\ \leq \
c\,\sqrt{ \sum_{|k|\in \mathbb{N}} \lambda_{mk}^{-2}}. 
\end{equation}
\end{theorem}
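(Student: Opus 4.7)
The strategy is to apply Theorem~\ref{theorem[error0]} directly: it suffices to verify, under the two nondecreasing-type hypotheses, the quantitative bound
\[
\varepsilon_m \;\le\; c\sqrt{\sum_{|k|\in\NN}\lambda_{mk}^{-2}},
\]
after which the norm estimate for $f-Q_m(f)$ and the bound for $M_{2m+1}(U_{\lambda,2}(\bT),\varphi_\beta)_2$ both follow at once. The first component $\sup_{|k|>m}|\lambda_k^{-1}|$ of $\varepsilon_m$ is handled immediately: applying the nondecreasing-type property of $\{|\lambda_k|\}$ with $l=m$ gives $|\lambda_k|\ge c|\lambda_m|$ for every $|k|>m$, hence the supremum is bounded by $c^{-1}|\lambda_m|^{-1}$, and $|\lambda_m|^{-2}$ already appears (with $|\lambda_{-m}|^{-2}$) as the $|k|=1$ summand of the target sum.

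For the second component $\sqrt{\sum_{|j|\in\NN}\Gamma_{m,j}^2}$ the aim is the pointwise bound $|\gamma_k|\le c|\lambda_{mj}|^{-1}$ on each $I_{m,j}$ with $|j|\ge 1$, since then summing $\Gamma_{m,j}^2\le c^{2}|\lambda_{mj}|^{-2}$ in $j$ reproduces precisely $\sum_{|k|\in\NN}|\lambda_{mk}|^{-2}$. I would achieve this in two substeps. First, using the factorisation
\[
\gamma_k \;=\; \alpha_{k'}\beta_k^{-1} \;=\; \frac{|\alpha_{k'}|}{|\alpha_k|}\,\lambda_k^{-1}
\]
(extending $\alpha_k=\beta_k/\lambda_k$ to all of $\ZZ$), the two nondecreasing-type hypotheses — the one on $\{|\beta_k|/|\alpha_k|\}$ and the one on $\{|\lambda_k|\}$ — combine to bound the cross-ratio $|\alpha_{k'}|/|\alpha_k|$ by a constant when $|k|>|k'|$, yielding $|\gamma_k|\le c|\lambda_k|^{-1}$. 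Second, every $k\in I_{m,j}$ satisfies $|k|\ge (2m+1)|j|-m>m|j|$ for $|j|\ge 1$, so one further application of the nondecreasing-type property of $\{|\lambda_k|\}$ gives $|\lambda_k|\ge c'|\lambda_{mj}|$ throughout $I_{m,j}$, and therefore $|\gamma_k|\le c''|\lambda_{mj}|^{-1}$.

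The principal difficulty of this argument is the first substep, namely the pointwise bound on $|\gamma_k|$: the indices $k'\in[-m,m]$ and $|k|>m$ straddle the truncation threshold at which $H_m$ is cut off, so no single monotonicity hypothesis in one direction suffices and both nondecreasing-type assumptions must be used simultaneously to absorb the mismatch between $|\alpha_{k'}|/|\alpha_k|$ and $|\beta_{k'}||\lambda_k|/(|\beta_k||\lambda_{k'}|)$. The second substep and the final summation in $j$ are routine once the pointwise estimate is in hand.
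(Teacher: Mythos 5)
Your proof follows the paper's argument exactly: both reduce the statement to the bound $\varepsilon_m\le c\sqrt{\sum_{|k|\in\NN}\lambda_{mk}^{-2}}$ via the three estimates $\sup_{|k|>m}|\lambda_k^{-1}|\le c_2|\lambda_m^{-1}|$, $|\gamma_k|\le c_1|\lambda_k^{-1}|$ and $\Gamma_{m,j}\le c_3|\lambda_{mj}^{-1}|$, and then invoke Theorem~\ref{theorem[error0]}; you merely supply the details (the factorisation $|\gamma_k|=\bigl(|\alpha_{k'}|/|\alpha_k|\bigr)|\lambda_k|^{-1}$ and the observation that $|k|\ge m|j|$ for $k\in I_{m,j}$) that the paper leaves implicit. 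The one caveat, which you share with the paper rather than introduce yourself, is that since $\beta_k/\alpha_k=\lambda_k$ the first hypothesis as literally written duplicates the second and does not by itself control the cross-ratio $|\alpha_{k'}|/|\alpha_k|$; the step $|\gamma_k|\le c_1|\lambda_k^{-1}|$ really requires a nondecreasing-type condition on $\{|\alpha_k|\}=\{|\beta_k|/|\lambda_k|\}$, which is evidently what was intended and what holds in all the corollaries.
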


\begin{proof}
From our hypothesis we have that $|\gamma_k| \le c_1 |\lambda_k^{-1}|$,  
\[
\sup_{|k| > m}|\lambda_k^{-1}|  
\leq  
c_2 \, |\lambda_m^{-1}|
\leq
c_2\,\sqrt{ \sum_{|k| \in \mathbb{N}} \lambda_{mk}^{-2}}
\]
 and 
$\Gamma_{m,k} \leq c_3|\lambda_{mk}^{-1}|$ 
for all $k \in \mathbb{N}$ with some positive constants $c_1$, $c_2$ and $c_3$. 
From these inequalities and Theorem~\ref{theorem[error0]}, the proof of the result is complete.
\end{proof}

From this theorem we have the following result.
\begin{coro} \label{coro2.5}
Let $|\lambda_k|=|\beta_k|=|\lambda_{-k}|=|\beta_{-k}|$ for all $k\in \ZZ$,
and $\Big\{\frac{|\lambda_k|}{k^r}:k\in \ZZ \Big\}$ be a nondecreasing-type sequence for
some $r> \frac{1}{2}$. Then there exists a positive constant $c$
such that  for all
 $m\in \mathbb{N}$ and $f\in \Phi_{\lambda, 2} (\mathbb{T})$ we have that
\begin{equation*}
\|f-Q_{m}(f)\|_2 \ \leq \ c\, |\lambda_m^{-1}| \|f\|_{\Phi_{\lambda, 2} (\mathbb{T})},
\end{equation*}
and consequently,
\begin{equation}  \nonumber
M_{2m+1}(U_{\lambda, 2}(\bT),\varphi_\beta)_2
\ \leq \
c\,|\lambda_m^{-1}|. 
\end{equation}
\end{coro}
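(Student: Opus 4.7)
The plan is to deduce Corollary \ref{coro2.5} directly from Theorem \ref{Error[d=1,p=2]}, using the extra structural hypotheses to control both the prefactor and the tail sum explicitly in terms of $|\lambda_m^{-1}|$. So the work splits into two routine but distinct pieces: first verifying that the hypotheses of Theorem \ref{Error[d=1,p=2]} are met, and then estimating the series $\sum_{|k|\in \NN} \lambda_{mk}^{-2}$ that appears on the right-hand side.

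For the first piece, I note that $\alpha_k = \beta_k/\lambda_k$, so the assumption $|\lambda_k|=|\beta_k|$ gives $|\beta_k|/|\alpha_k| = |\lambda_k|$. Hence the two sequences in the hypothesis of Theorem \ref{Error[d=1,p=2]} reduce to the single sequence $\{|\lambda_k|\}$. To see that $\{|\lambda_k|\}$ is itself of nondecreasing type, I would use the hypothesis on $\{|\lambda_k|/k^r\}$: for $|k|>|l|$ with $r>0$,
\begin{equation*}
|\lambda_k| \ \geq \ c\,|\lambda_l|\,(|k|/|l|)^r \ \geq \ c\,|\lambda_l|,
\end{equation*}
which is exactly the required bound. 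This lets me invoke Theorem \ref{Error[d=1,p=2]} to get
\begin{equation*}
\|f-Q_m(f)\|_2 \ \leq \ c\,\|f\|_{\Phi_{\lambda,2}(\TT)} \sqrt{\sum_{|k|\in \NN} \lambda_{mk}^{-2}}.
\end{equation*}

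For the second piece, I would exploit the symmetry $|\lambda_{mk}|=|\lambda_{-mk}|$ to rewrite the sum as $2\sum_{k=1}^\infty |\lambda_{mk}|^{-2}$. Applying the nondecreasing-type property of $\{|\lambda_k|/k^r\}$ to the pair $mk$ and $m$ (with $k\geq 1$) yields
\begin{equation*}
|\lambda_{mk}|/(mk)^r \ \geq \ c\,|\lambda_m|/m^r, \quad \text{i.e.} \quad |\lambda_{mk}| \ \geq \ c\,|\lambda_m|\,k^r,
\end{equation*}
so that $\lambda_{mk}^{-2} \leq c^{-2}\,|\lambda_m|^{-2}\,k^{-2r}$. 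Summing and using $r>1/2$ to guarantee convergence of $\sum k^{-2r}$ then gives $\sqrt{\sum_{|k|\in\NN} \lambda_{mk}^{-2}} \leq C\,|\lambda_m^{-1}|$. Combined with the bound from Theorem \ref{Error[d=1,p=2]} this yields the claimed inequality, and the bound on $M_{2m+1}(U_{\lambda,2}(\bT),\varphi_\beta)_2$ follows by taking the supremum over $f\in U_{\lambda,2}(\bT)$ exactly as in Theorem \ref{Error[d=1,p=2]}.

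There is no real obstacle here beyond bookkeeping: the proof is a specialization of Theorem \ref{Error[d=1,p=2]} in which the nondecreasing-type hypothesis has been strengthened to the concrete polynomial form $|\lambda_k|/k^r$, which is just strong enough both to yield the needed lower bound $|\lambda_{mk}|\geq c|\lambda_m| k^r$ and to ensure, via $r>1/2$, that the resulting series is summable. The only point deserving care is that the threshold $r>1/2$ plays a double role (controlling both the tail of the series and the positivity of the exponent needed to promote $\{|\lambda_k|/k^r\}$ nondecreasing-type to $\{|\lambda_k|\}$ nondecreasing-type), so I would emphasize this condition explicitly.
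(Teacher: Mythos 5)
Your proposal is correct and follows essentially the same route as the paper: the paper likewise derives $|\lambda_{mk}|\geq c'\,|\lambda_m|\,k^r$ from the nondecreasing-type hypothesis on $\{|\lambda_k|/k^r\}$, bounds $\sqrt{\sum_{k}\lambda_{mk}^{-2}}$ by $C|\lambda_m^{-1}|$ using $r>\tfrac12$, and then invokes Theorem \ref{Error[d=1,p=2]}. The only difference is that you explicitly verify the hypotheses of that theorem (via $|\beta_k|/|\alpha_k|=|\lambda_k|$ and the promotion of $\{|\lambda_k|/k^r\}$ to $\{|\lambda_k|\}$ being nondecreasing-type), a step the paper leaves implicit.
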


\begin{proof}
We see from the hypothesis that
$$\frac{|\lambda_{mk}|}{(mk)^r} \geq c'\,\frac{|\lambda_m|}{m^r}$$
for some positive constant $c'$ and then from which it follows that
\[
\frac{|\lambda_{m k}|}{|\lambda_m|}\geq c'\, k^r
\]
for all $m,k\in \NN$.
Hence, we conclude that
\[
c'\,\sqrt{ \sum_{k\in \mathbb{N}} \lambda_{mk}^{-2}}  \leq  |\lambda_m^{-1}|  \sqrt{\sum_{k\in \mathbb{N}}k^{-2r}} \ .
\]
Note that, since $r>\frac{1}{2}$ we have  $\sum_{k\in \mathbb{N}}k^{-2r} < \infty$ and then by applying 
Theorem  \ref{Error[d=1,p=2]}
we complete the proof.
\end{proof}

\begin{coro} \label{coro2.6}
Let $|\beta_k|=|\beta_{-k}|=|\lambda_k|^2=|\lambda_{-k}|^2$ for all $k\in \ZZ$, and 
$\{|\lambda_k |:k\in\ZZ\}$ be  a nondecreasing-type sequence.
Then there exists a positive constant $c$ such that  for all
 $m\in \mathbb{N}$ and $f\in \Phi_{\lambda, 2} (\mathbb{T})$ we have that
\begin{equation*}
\|f-Q_{m}(f)\|_2 \leq c\, \|f\|_{\Phi_{\lambda, 2} (\mathbb{T})}
\sqrt{ \sum_{k\in \mathbb{N}} \lambda_{mk}^{-2}}, 
\end{equation*}
and consequently,
\begin{equation}  \nonumber
M_{2m+1}(U_{\lambda, 2}(\bT),\varphi_\beta)_2
\ \leq \
c\,\sqrt{ \sum_{|k|\in \mathbb{N}} \lambda_{mk}^{-2}}. 
\end{equation}
\end{coro}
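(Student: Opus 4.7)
The plan is simply to verify that the hypotheses of Theorem \ref{Error[d=1,p=2]} are met under the new assumptions, and then to invoke that theorem directly. The only thing to watch is how the normalization $|\beta_k| = |\lambda_k|^2$ interacts with the definition $\alpha_k = \beta_k/\lambda_k$ that enters the ratio sequence in Theorem \ref{Error[d=1,p=2]}.

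First I would compute $|\alpha_k|$. From $\alpha_k = \beta_k/\lambda_k$ and the hypothesis $|\beta_k| = |\lambda_k|^2$ we immediately read off $|\alpha_k| = |\lambda_k|$, and therefore
\[
\frac{|\beta_k|}{|\alpha_k|} \;=\; \frac{|\lambda_k|^2}{|\lambda_k|} \;=\; |\lambda_k|.
\]
So the two sequences $\bigl\{|\beta_k|/|\alpha_k|\bigr\}$ and $\{|\lambda_k|\}$ whose nondecreasing-type behavior Theorem \ref{Error[d=1,p=2]} requires are in fact the same sequence, and that sequence is nondecreasing-type by the hypothesis of the corollary. This is the whole content of the verification; nothing deeper is needed.

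Next I would apply Theorem \ref{Error[d=1,p=2]} to $f \in \Phi_{\lambda,2}(\TT)$ to obtain
\[
\|f - Q_m(f)\|_2 \;\leq\; c\,\|f\|_{\Phi_{\lambda,2}(\TT)}\,\sqrt{\sum_{|k|\in\NN}\lambda_{mk}^{-2}}.
\]
Finally, the hypothesis $|\lambda_k|^2 = |\lambda_{-k}|^2$ gives $|\lambda_{-k}| = |\lambda_k|$, so
\[
\sum_{|k|\in\NN}\lambda_{mk}^{-2} \;=\; 2\sum_{k\in\NN}\lambda_{mk}^{-2},
\]
and the factor $\sqrt{2}$ is absorbed into the constant $c$, producing the form stated in the corollary. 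The bound on $M_{2m+1}(U_{\lambda,2}(\bT),\varphi_\beta)_2$ then follows from the definition of this quantity exactly as in the deductions given after Theorems \ref{theorem[error0]} and \ref{Error[d=1,p=2]}.

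I do not anticipate a genuine obstacle here: the corollary is a specialization in which the two structural sequences of Theorem \ref{Error[d=1,p=2]} coincide, and the only substantive step is the bookkeeping identity $|\alpha_k| = |\lambda_k|$ that makes the hypothesis on $\{|\beta_k|/|\alpha_k|\}$ redundant with the hypothesis on $\{|\lambda_k|\}$.
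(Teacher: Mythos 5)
Your reduction to Theorem \ref{Error[d=1,p=2]} is the same route the paper takes, but the way you verify its hypotheses exposes a problem rather than resolving one. You correctly observe that $|\beta_k|/|\alpha_k|=|\lambda_k|$ identically, so that, read literally, Theorem \ref{Error[d=1,p=2]} imposes no condition at all relating $\beta$ to $\lambda$. If that reading were the operative one, Corollary \ref{coro2.6} would hold for every $\beta$ with $\varphi_\beta\in L_2(\TT)$, and it does not: the error of $Q_m$ depends on $\beta$ through $\gamma_k=\alpha_{k'}\beta_k^{-1}$, and for, say, $\lambda_k=(1+|k|)^2$, $\beta_k=(1+|k|)^{3/4}$ and $g\equiv 1$ one computes $\|f-Q_m(f)\|_2^2=\sum_{j\neq 0}|\gamma_{(2m+1)j}|^2\asymp m^{-3/2}$, far larger than the claimed $\sum_{k\in\NN}\lambda_{mk}^{-2}\asymp m^{-4}$. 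So the printed hypothesis of Theorem \ref{Error[d=1,p=2]} must be a misprint (the quantity that actually needs to be of nondecreasing type is $|\alpha_k|=|\beta_k|/|\lambda_k|$, since that is what yields the inequality $|\gamma_k|=|\alpha_{k'}|\,|\beta_k^{-1}|\le c_1|\lambda_k^{-1}|$ on which its proof runs), and a proof of the corollary that treats this hypothesis as vacuous cannot be the intended one: your argument never uses $|\beta_k|=|\lambda_k|^2$ in a load-bearing way, yet that assumption is essential.

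The missing step, which is exactly what the paper's one-line proof supplies, is the estimate that the corollary's hypothesis on $\beta$ actually buys. Since $\varphi_\lambda\in L_2(\TT)$ forces $\sum_{k}|\lambda_k|^{-2}<\infty$ and hence $|\lambda_k|\ge c_0>0$, the relation $|\beta_k|=|\lambda_k|^2$ gives $|\beta_k^{-1}|=|\lambda_k|^{-2}\le c'\,|\lambda_k^{-1}|$; equivalently, using your (correct) identity $|\alpha_{k'}|=|\lambda_{k'}|$ together with the nondecreasing-type property of $\{|\lambda_k|\}$ and $|k|>m\ge |k'|$, one gets $|\gamma_k|=|\lambda_{k'}|\,|\lambda_k|^{-2}\le c^{-1}|\lambda_k^{-1}|$, whence $\Gamma_{m,j}\le c_3|\lambda_{mj}^{-1}|$. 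These are the inequalities that feed Theorem \ref{theorem[error0]}/\ref{Error[d=1,p=2]}. You should insert this verification before invoking the theorem instead of discarding the identity $|\alpha_k|=|\lambda_k|$ as mere bookkeeping. Your final symmetrization $\sum_{|k|\in\NN}\lambda_{mk}^{-2}=2\sum_{k\in\NN}\lambda_{mk}^{-2}$ is fine.
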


\begin{proof}
The inclusions $\varphi_\lambda, \varphi_\beta \in L_2(\TT)$ and the equations $|\beta_k|=|\beta_{-k}|=|\lambda_k|^2=|\lambda_{-k}|^2$
for all $k\in \ZZ$ yield that  
$|\beta_k^{-1}| \le c' \, |\lambda_k^{-1}|$ for all $k\in \ZZ$ with a positive constants $c'$.
Hence, by applying Theorem  \ref{Error[d=1,p=2]} we prove the corollary.
\end{proof}

Similarly to the proof of Corollary \ref{coro2.5} we can prove the following fact.

\begin{coro} \label{coro2.7}
Let $|\beta_k|=|\beta_{-k}|=|\lambda_k|^2=|\lambda_{-k}|^2$ for all $k\in \ZZ$ and 
$\Big\{\frac{|\lambda_k|}{k^r}:k\in\ZZ \Big\}$ be a nondecreasing-type sequence $r> \frac{1}{2}$. 
Then there exists a positive constant $c$ such that for all
 $m\in \mathbb{N}$ and $f\in \Phi_{\lambda, 2} (\mathbb{T})$ we have that
\begin{equation}\nonumber
\|f-Q_{m}(f)\|_2 \ \leq \ c\, |\lambda_m^{-1}| \|f\|_{\Phi_{\lambda, 2} (\mathbb{T})},
\end{equation}
and consequently,
\begin{equation}  \nonumber
M_{2m+1}(U_{\lambda, 2}(\bT),\varphi_\beta)_2
\ \leq \
c\, |\lambda_m^{-1}|. 
\end{equation}
\end{coro}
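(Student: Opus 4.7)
The plan is to combine Corollary \ref{coro2.6} with the summation estimate that was the key step in the proof of Corollary \ref{coro2.5}. Since the hypothesis $|\beta_k|=|\lambda_k|^2$ is already the setting of Corollary \ref{coro2.6}, the bound
\[
\|f-Q_{m}(f)\|_2 \ \le \ c\,\|f\|_{\Phi_{\lambda,2}(\TT)}\,\sqrt{\sum_{k\in \NN}\lambda_{mk}^{-2}}
\]
is available for free. The only remaining task is to show that the additional hypothesis that $\{|\lambda_k|/k^r : k\in\ZZ\}$ is a nondecreasing-type sequence with $r>\tfrac{1}{2}$ forces the square-root tail above to be controlled by $|\lambda_m^{-1}|$, up to an absolute constant.

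For that, I would imitate verbatim the computation made in Corollary \ref{coro2.5}. The nondecreasing-type property gives a positive constant $c'$ with
\[
\frac{|\lambda_{mk}|}{(mk)^r} \ \ge \ c'\,\frac{|\lambda_m|}{m^r} \quad \text{for all } m,k\in\NN,
\]
which rearranges to $|\lambda_{mk}|\ge c'\,k^r|\lambda_m|$ and hence $|\lambda_{mk}^{-1}|\le (c')^{-1}k^{-r}|\lambda_m^{-1}|$. Squaring and summing over $k\in \NN$, using that $r>\tfrac{1}{2}$ so that $\sum_{k\in\NN}k^{-2r}<\infty$, yields
\[
\sqrt{\sum_{k\in\NN}\lambda_{mk}^{-2}} \ \le \ C\,|\lambda_m^{-1}|
\]
for some positive constant $C$. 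Plugging this bound into the conclusion of Corollary \ref{coro2.6} gives the desired inequality $\|f-Q_m(f)\|_2\le c\,|\lambda_m^{-1}|\,\|f\|_{\Phi_{\lambda,2}(\TT)}$, and taking the supremum over $f\in U_{\lambda,2}(\bT)$ produces the stated estimate for $M_{2m+1}(U_{\lambda,2}(\bT),\varphi_\beta)_2$.

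There is essentially no obstacle here: both of the ingredients (the $|\beta_k|=|\lambda_k|^2$ error bound and the polynomial comparison via the $k^r$ hypothesis) have already been established in the preceding corollaries, so the proof is a short two-line combination. The only minor care needed is to note that the sum $\sum_{|k|\in\NN}\lambda_{mk}^{-2}$ appearing in Corollary \ref{coro2.6} differs from the one-sided sum in the Corollary \ref{coro2.5} step only by a factor of two thanks to the symmetry $|\lambda_k|=|\lambda_{-k}|$, which is guaranteed by $|\lambda_k|^2=|\lambda_{-k}|^2$, so the constants absorb cleanly.
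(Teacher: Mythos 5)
Your proof is correct and matches the paper's intended argument: the paper gives no explicit proof of Corollary \ref{coro2.7}, remarking only that it follows ``similarly to the proof of Corollary \ref{coro2.5}'', which is exactly the combination you carry out (the $|\beta_k|=|\lambda_k|^2$ bound of Corollary \ref{coro2.6} plus the $k^r$ comparison from Corollary \ref{coro2.5}). The only detail worth adding is that invoking Corollary \ref{coro2.6} also requires $\{|\lambda_k|\colon k\in\ZZ\}$ to be a nondecreasing-type sequence, which is not assumed verbatim in Corollary \ref{coro2.7} but follows at once from the hypothesis: for $|k|>|l|$ and $r>0$ one has $|\lambda_k|\geq c\,(|k|/|l|)^r\,|\lambda_l|\geq c\,|\lambda_l|$.
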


\begin{coro} \label{coro2.8}
Let $r > \frac{1}{2}$ and  $\lambda_k=\beta_k=|k|^r$ for all $k\in \ZZ, k\ne 0$, and $\lambda_0 = 1$. 
Then there exists a positive constant $c$ such that  for all   $m\in \mathbb{N}$ and 
$f\in \Phi_{\lambda, 2} (\mathbb{T})$ we have that
\begin{equation*}
\|f-Q_{m}(f)\|_2 \ \leq \ c\, m^{-r} \|f\|_{\Phi_{\lambda, 2} (\mathbb{T})},
\end{equation*}
and consequently,
\begin{equation}  \nonumber
M_{2m+1}(U_{\lambda, 2}(\bT),\varphi_\beta)_2
\ \leq \
c\, m^{-r}. 
\end{equation}
\end{coro}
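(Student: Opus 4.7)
The plan is to obtain this corollary as an immediate specialization of Corollary \ref{coro2.5}, whose hypotheses are essentially tailor-made for the present setting. My first step would be to verify those hypotheses when $\lambda_k = \beta_k = |k|^r$ for $k \neq 0$ and $\lambda_0 = 1$. The symmetry $|\lambda_k| = |\beta_k| = |\lambda_{-k}| = |\beta_{-k}|$ is immediate from $|k|^r = |-k|^r$. For the nondecreasing-type requirement on $\{|\lambda_k|/k^r : k \in \ZZ\}$, I would observe that this quotient equals $1$ identically for $k \neq 0$, so the sequence is constant off the origin and therefore trivially of nondecreasing type with constant $c = 1$; the single pathological value at $k=0$ is inconsequential because the proof of Corollary \ref{coro2.5} invokes the nondecreasing-type inequality only at non-zero indices $m, mk \in \NN$.

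With the hypotheses verified, Corollary \ref{coro2.5} supplies a positive constant $c$ (depending only on $r$) such that
\begin{equation*}
\|f - Q_m(f)\|_2 \ \leq \ c\,|\lambda_m^{-1}|\,\|f\|_{\Phi_{\lambda, 2}(\TT)}
\end{equation*}
for all $m \in \NN$ and $f \in \Phi_{\lambda, 2}(\TT)$. Substituting $|\lambda_m^{-1}| = m^{-r}$, valid for every $m \geq 1$ by definition of $\lambda$, delivers the first claimed inequality. The bound on $M_{2m+1}(U_{\lambda,2}(\TT),\varphi_\beta)_2$ is then obtained by taking the supremum over $f$ in the unit ball $U_{\lambda,2}(\TT)$, using that $Q_m(f)$ is, by construction (\ref{Qm}), a linear combination of exactly $2m+1$ translates of $\varphi_\beta$.

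I do not anticipate any genuine obstacle: the substantive work is already contained in Theorem \ref{theorem[error0]} and Corollary \ref{coro2.5}, and the assumption $r > 1/2$ enters there precisely to ensure convergence of $\sum_{k \in \NN} k^{-2r}$. An equally short alternative would be to bypass Corollary \ref{coro2.5} entirely and apply Theorem \ref{Error[d=1,p=2]} directly, after noting that $\sum_{|k| \in \NN} \lambda_{mk}^{-2} = m^{-2r}\sum_{|k|\in \NN}|k|^{-2r}$, whose sum is finite exactly when $r > 1/2$; this gives the same bound $c\,m^{-r}$ slightly more transparently.
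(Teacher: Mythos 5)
Your proposal is correct and matches the paper's intent: the paper gives no separate proof of Corollary \ref{coro2.8}, treating it as an immediate specialization of Corollary \ref{coro2.5} (equivalently Theorem \ref{Error[d=1,p=2]}) with $\lambda_k=\beta_k=|k|^r$, exactly as you do. Your explicit handling of the degenerate index $k=0$ in the nondecreasing-type hypothesis is a welcome clarification that the paper leaves implicit.
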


\begin{remark} Note that under the assumptions of Corollary \ref{coro2.6}, $K_\lambda(x,y):= \varphi_\beta(x-y)$ is the reproducing kernel for the Hilbert space $\Phi_{\lambda, 2} (\mathbb{T})$.  This
means, for every function $f\in \Phi_{\lambda, 2} (\mathbb{T})$ and
$x\in\bT$, we have that
\begin{equation} \nonumber
f(x) \ = (f,K_\lambda(\cdot,x))_{\Phi_{\lambda, 2} (\mathbb{T})},
\end{equation}
where $(\cdot,\cdot)_{\Phi_{\lambda, 2} (\mathbb{T})}$ denotes the inner product on
the Hilbert space $\Phi_{\lambda, 2} (\mathbb{T})$. 
It is known that the linear span of the set of functions 
$\bK_\lambda:= \{K_\lambda(\cdot - y): y \in \TT\}$ is dense in the Hilbert space $\Phi_{\lambda, 2} (\mathbb{T})$.
Under a certain restriction on the sequence $\lambda$, Corollaries \ref{coro2.6}, \ref{coro2.7} and \ref{coro2.8} give an explicit rate of the error of the linear approximation of 
$f \in \Phi_{\lambda, 2} (\mathbb{T})$ by the function $Q_m(f)$ belonging $\bK_\lambda$.
For a definitive treatment of reproducing kernels, see, for example, \cite{Ar50}. 
Corollary \ref{coro2.8} has been proven  as Theorem 2.10 in \cite{1} where 
$\Phi_{\lambda, 2} (\mathbb{T})$ is the Korobov space $K^r_2(\TT)$.
\end{remark}

\subsection{Error estimates for functions in the space $\Phi_{\lambda, p} (\mathbb{T})$}

For this purpose, we define, for $m \in \mathbb{N} $, the quantity
\begin{equation} \label{lambda}
\varepsilon_m 
:= \ 
\max \{ \sum_{|k| >m} |\Delta \lambda_k^{-1} |,
\sum_{|k| >m} |\Delta \gamma_k| + \sum_{k\in \mathbb{Z}}
|\gamma_{k(2m+1)+m}| \} ,
\end{equation}
where $\gamma_k$ is defined as in Theorem \ref{theorem[error0]}, and 
$\Delta \theta_k:= \theta_k - \theta_{k+1}$ for the sequence $\{\theta_k:\, k\in\NN\}$.

Now, we are ready to state the the following result.
\begin{theorem}\label{2.1}
If $1 < p < \infty$ then there exists a positive constant $c$ such
that for all $f\in \Phi_{\lambda, p} (\TT)$ and $m\in \mathbb{N}$,
we have that
\begin{equation} \label{ineq[|f-Q_m(f)|_p]}
\|f-Q_{m}(f)\|_p \ \leq \ c\, \varepsilon_m \|f\|_{\Phi_{\lambda, p} (\TT)},
\end{equation}
and consequently,
\begin{equation}  \nonumber
M_{2m+1}(U_{\lambda,p}(\bT),\varphi_\beta)_p
\ \leq \
c\,\varepsilon_m. 
\end{equation}
\end{theorem}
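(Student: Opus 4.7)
The plan is to adapt the Fourier-analytic argument of Theorem~\ref{theorem[error0]}, replacing Parseval's identity (which fails away from $p=2$) with Abel summation against one-sided partial Fourier sums. The classical input is the M.~Riesz theorem: for $1<p<\infty$, the operators $S_N^{\pm}$ and, more generally, any rectangular truncation $g\mapsto \sum_{-M\le l\le N}\hat{g}(l)e^{il\cdot}$ are uniformly bounded on $L_p(\TT)$. The formal identity
\[
Q_{m}(f)(x)-f(x) \;=\; A_m(x)-B_m(x) \;=\; \sum_{|k|>m}\gamma_k\,\hat{g}(k')\,e^{ikx}\;-\;\sum_{|k|>m}\lambda_k^{-1}\,\hat{g}(k)\,e^{ikx},
\]
derived inside the proof of Theorem~\ref{theorem[error0]}, still holds under the present hypotheses; by the triangle inequality and $\|f\|_{\Phi_{\lambda,p}(\TT)}=\|g\|_p$, it suffices to bound $\|A_m\|_p$ and $\|B_m\|_p$ separately by $c\,\varepsilon_m\|g\|_p$.

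For $B_m$, observe that $B_m$ is a Fourier multiplier on $g$ with symbol $\lambda_k^{-1}\mathbf{1}_{|k|>m}$. Split into the sums over $k>m$ and $k<-m$, and apply Abel summation with $a_k=\lambda_k^{-1}$ against the one-sided partial sums $S_N^{\pm}g$. Since $\sum|\Delta\lambda_k^{-1}|<\infty$ (by the definition of $\varepsilon_m$), one has $\lambda_k^{-1}\to 0$ on both tails, so the boundary term $a_N S_N^{\pm}g$ vanishes in $L_p$ as $N\to\infty$; the sum of jumps then yields
\[
\|B_m\|_p \;\le\; c\,\|g\|_p\sum_{|k|>m}|\Delta\lambda_k^{-1}| \;\le\; c\,\varepsilon_m\|g\|_p.
\]

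For $A_m$ the mismatch between the frequency $k$ and the aliased coefficient $\hat{g}(k')$ forces a block decomposition. Parametrise $\{|k|>m\}$ by $k = k'+(2m+1)j$ with $k'\in[-m,m]$ and $j\ne 0$, and factor out the modulation:
\[
A_m(x) \;=\; \sum_{j\ne 0}e^{i(2m+1)jx}\,B^{(j)}(x), \qquad B^{(j)}(x)\;:=\;\sum_{|k'|\le m}\gamma_{k'+(2m+1)j}\,\hat{g}(k')\,e^{ik'x}.
\]
Because $|e^{i(2m+1)jx}|=1$, Minkowski gives $\|A_m\|_p\le \sum_{j\ne 0}\|B^{(j)}\|_p$, and each $B^{(j)}$ is now a bona fide Fourier multiplier on the low-frequency truncation of $g$. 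Abel summation in $k'\in[-m,m]$, applied to the partial sums $P_{k'}g := \sum_{-m\le l\le k'}\hat{g}(l)e^{ilx}$ (uniformly $L_p$-bounded by M.~Riesz), produces
\[
\|B^{(j)}\|_p \;\le\; c\,\|g\|_p\Bigl(|\gamma_{(2m+1)j+m}| + \sum_{k\in I_{m,j},\,k<(2m+1)j+m}|\Delta\gamma_k|\Bigr).
\]
Summing over $j\ne 0$, the boundary terms collect into $\sum_{k\in\ZZ}|\gamma_{(2m+1)k+m}|$ and the interior jumps into a subsum of $\sum_{|k|>m}|\Delta\gamma_k|$; hence $\|A_m\|_p \le c\,\varepsilon_m\|g\|_p$, and combining with the $B_m$ estimate finishes the proof.

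The main obstacle is the estimate for $A_m$: it is not an honest Fourier multiplier on $g$ because of the aliasing $\hat{g}(k')$ versus $e^{ikx}$, so neither a direct multiplier theorem nor a straight Abel summation in $k$ applies. The block-plus-modulation decomposition is the crucial device; it transfers the difficulty into a genuine multiplier problem inside each frequency block of length $2m+1$, after which Abel summation produces exactly the two characteristic terms of $\varepsilon_m$, namely the interior jumps $|\Delta\gamma_k|$ and the block-boundary values $|\gamma_{(2m+1)k+m}|$.
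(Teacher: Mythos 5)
Your proposal is correct and follows essentially the same route as the paper: the same error formula $Q_m(f)-f=A_m-B_m$, the same block decomposition of $A_m$ over the intervals $I_{m,j}$ with the modulation $e^{i(2m+1)jx}$ factored out, summation by parts within each block (your in-block Abel summation with right-endpoint boundary term is exactly the paper's modified difference operator $\Lambda$), and the uniform $L_p$-boundedness of the partial-sum operators $F_{r,s}$ (your M.~Riesz input is the paper's Marcinkiewicz-multiplier corollary). The only cosmetic difference is that you apply Minkowski to the blocks before Abel summation and you make explicit the vanishing of the boundary term for $B_m^{\pm}$, which the paper leaves implicit.
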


\begin{proof}
In order to prove \eqref{ineq[|f-Q_m(f)|_p]} we need an auxiliary result which is a direct corollary of the well-known Marcinkiewicz multiplier theorem, see, e.g., \cite[Lemma 2.7]{1}. 
For $r, s \in \ZZ$ and an integrable function $g$ on $\TT$ we introduce 
the trigonometric polynomial
\begin{equation} \nonumber
F_{r,s}(g,x):=\sum_{r \le k \le s} \hat{g}({k})  e^{i kx}.
\end{equation}
Then there exists an absolute positive constant $c'$ such that for all $g \in L_p(\bT)$  and 
$r, s \in \ZZ$ we have that
\begin{equation}\label{Grs}
\|F_{r, s} (g)\|_p \leq c' \|g\|_p.
\end{equation}

In a completely similar way as in the proof of Theorem~\ref{theorem[error0]}, we can establish the formula   
\begin{equation}\label{Qm-f}
Q_{m}(f)(x) - f(x) 
\ = \ 
\sum_{|k|>m}  \gamma_k \hat{g}({k^{'}}) e^{i kx} - \sum_{|k|>m}\lambda_k^{-1} \hat{g}(k) e^{i kx}
\end{equation}
for a function $f \in \Phi_{\lambda, p} (\TT)$ represented as $f= \varphi_\lambda * g$, $g \in L_p(\TT)$.

The next step is to  decompose each sum above into two parts.  Specifically, we write the first sum above as
\begin{equation}\label{A+}
\sum_{k>m}  \gamma_k\hat{g}({k^{'}})  e^{i kx} + \sum_{k<-m}  \gamma_k  \hat{g}({k^{'}}) e^{i kx}.
\end{equation}
We call the   the first   sum in  equation  (\ref{A+}) $A_m^+(x)$ and the other $A_m^-(x)$. Now, we  readily  rewrite $A_m^+(x)$  in the form
\begin{equation} \label{A+1}
A_m^+(x)=
\sum_{j \in \mathbb{N} }\sum_{k\in I_{m,j}} \gamma_k e^{i kx} \hat{g}(k^{'}).
\end{equation}

We shall express  the right hand side of equation  \eqref{A+1} in
an alternate form by using summation by parts.
For this purpose, we introduce the modified difference operator defined on vectors as
\begin{equation} \nonumber
\Lambda\gamma_k=
\begin{cases}
\gamma_k- \gamma_{k+1}, &\text {if } j(2m+1)-m \leq k < j(2m+1)+m \\
\gamma_k, &\text {if }  k= j(2m+1)+m .
\end{cases}
\end{equation}
With this notation in hand and  the fact that, for $k\in I_{m,j}$ we have that $k^{'}= k - j(2m+1)$, we conclude that
\begin{equation} \nonumber
\begin{split}
A_m^+(x) 
&=
\sum_{j \in \mathbb{N} }\sum_{k\in I_{m,j}} \gamma_k e^{i kx} \hat{g}(k - j(2m+1)) \\[1ex]
 & = \sum_{j \in \mathbb{N} }  \sum_{k\in I_{m,j}} \Lambda \gamma_k \, e^{i j(2m+1)x} G_{-m, k - j(2m+1)}.
\end{split}
\end{equation}
Consequently,  according to \eqref{Grs} and the H\"older inequality, there exists a positive constant $c_1$ such that
\begin{equation}\nonumber
\|A_m^+\|_p
 \le
c_1\,\|g\|_p  \sum_{j \in \mathbb{N} }\sum_{k\in I_{m,j}} |\Lambda\gamma_k|.
\end{equation}
From this inequality and the definition of $\varepsilon_m$, given in equation (\ref{lambda}),
we conclude that
\begin{equation} \nonumber
\|A_m^+\|_p
\leq c_1 \varepsilon_m\, \|g\|_p.
\end{equation}
A bound on   $\|A_m^-\|_p$ follows by a similar argument and yields the inequality
\begin{equation} \nonumber
\|A_m^-\|_p
\leq c_1 \varepsilon_m\, \|g\|_p.
\end{equation}

There still remains the task of bounding the second sum in equation (\ref{Qm-f}). As before, we split it into two parts
\begin{equation}\nonumber
\sum_{k>m}  \lambda^{-1}_k e^{i kx} \hat{g}({k}) + \sum_{k<-m}  \lambda^{-1}_k e^{i kx} \hat{g}({k})
\end{equation}
and call the first sum above $B_m^+(x)$  and the second sum  $B_m^-(x)$. As before, summation by parts yields the alternate form
\begin{equation} \nonumber
B_m^+(x)
= \sum_{k \ge m+1} \Delta \lambda_k^{-1} G_{m+1,k }
\end{equation}
 from which  we deduce that
\begin{equation*}
\|B_m^+(x) \|_p \leq
 \sum_{k \ge m+1} |\Delta \lambda_k^{-1}| \| g\|_p.
\end{equation*}
Therefore, by \eqref{lambda} we obtain that
\begin{equation}\nonumber
\|B_m^+(x) \|_p \leq c_1  \varepsilon_m \| g\|_p
\end{equation}
and, in a similar way, we prove that
\begin{equation}\nonumber
 \|B_m^-\|_p
\ \leq c_1 \varepsilon_m\, \|g\|_p.
\end{equation}
Combining our remarks above proves the result.
\end{proof}

Now, we are ready to state the the following result.
\begin{theorem} \label{theorem[error]}
Let  $1 < p < \infty$ and  $\beta_k=\beta_{-k}$, $\lambda_k=\lambda_{-k}$. Assume that 
$\{ \log (\beta_k/\lambda_k)  / {2^k}: k \in \NN \}$ 
and $\{\lambda_k:k\in\NN\}$ are nondecreasing, positive sequences.  Then there exists a positive constant $c$ such
that for all $f\in \Phi_{\lambda, p} (\TT)$ and $m\in \mathbb{N}$,
we have that
\begin{equation}\label{mk}
\|f-Q_{m}(f)\|_p \ \leq  c \, \|f\|_{\Phi_{\lambda, p}(\TT)} \sum_{k \in \NN} \lambda_{mk}^{-1},
\end{equation}
and consequently,
\begin{equation}  \nonumber
M_{2m+1}(U_{\lambda,p}(\bT),\varphi_\beta)_p
\ \leq \
c\, \sum_{k \in \NN} \lambda_{mk}^{-1}. 
\end{equation}
\end{theorem}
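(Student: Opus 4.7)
\emph{Proof proposal.} The plan is to invoke Theorem \ref{2.1} and reduce the problem to proving $\varepsilon_m \le c\sum_{k\in\NN}\lambda_{mk}^{-1}$, where $\varepsilon_m$ is defined in \eqref{lambda}. I split $\varepsilon_m$ into three pieces,
$S_1 := \sum_{|k|>m}|\Delta \lambda_k^{-1}|$,
$S_2 := \sum_{|k|>m}|\Delta \gamma_k|$, and
$S_3 := \sum_{k\in\ZZ}|\gamma_{k(2m+1)+m}|$,
and bound each by a multiple of $\sum_{k\in\NN}\lambda_{mk}^{-1}$.

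The bound on $S_1$ is immediate: since $\lambda_k$ is nondecreasing and symmetric, $\lambda_k^{-1}$ is monotone on each of $\NN$ and $-\NN$, so telescoping gives $S_1 \lesssim \lambda_m^{-1}$, the $k=1$ term of the target sum. The hypothesis enters through the pointwise estimate $|\gamma_k|\le \lambda_k^{-1}$ for $|k|>m$. Indeed, because $\{2^k\}$ is positive and nondecreasing, the assumption that $\{\log(\beta_k/\lambda_k)/2^k\}$ is positive and nondecreasing forces $\{\beta_k/\lambda_k\}$ to be positive and nondecreasing on $\NN$; together with the symmetry and $|k'|\le m\le |k|$ this gives both $\beta_{k'}/\lambda_{k'}\le \beta_m/\lambda_m$ and $\beta_m/\beta_k\le \lambda_m/\lambda_k$, whence
\[
|\gamma_k| \;=\; \frac{\beta_{k'}/\lambda_{k'}}{\beta_k} \;\le\; \frac{\beta_m/\lambda_m}{\beta_k} \;=\; \frac{1}{\lambda_m}\cdot\frac{\beta_m}{\beta_k} \;\le\; \lambda_k^{-1}.
\]
For $S_3$, every index $k(2m+1)+m$ has $k'=m$, so the estimate yields $|\gamma_{k(2m+1)+m}|\le \lambda_{k(2m+1)+m}^{-1}$; grouping by $|k|$ and using monotonicity of $\lambda$ dominates $S_3$ by a subseries of $\sum_{k\in\NN}\lambda_{mk}^{-1}$.

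The main obstacle is $S_2$, which I would handle block by block over the $I_{m,j}$. Within each block with $|j|\ge 1$, factor $\gamma_k = a(k')/\beta_k$ with $a(k'):=\beta_{k'}/\lambda_{k'}$ and apply the discrete product rule to $\Delta\gamma_k$. The factor $k\mapsto 1/\beta_k$ is monotone on the block, with total variation bounded by $\beta_{(2m+1)|j|-m}^{-1}$; the factor $k\mapsto a(k')$, as $k'$ traverses $[-m,m]$, is symmetric about $k'=0$ and nondecreasing in $|k'|$, with total variation bounded by $2\beta_m/\lambda_m$. Combining these bounds the block's contribution to $S_2$ by a constant multiple of $(\beta_m/\lambda_m)/\beta_{(2m+1)|j|-m}$, which by the pointwise estimate is at most $\lambda_{(2m+1)|j|-m}^{-1}$. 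Summing over $|j|\ge 1$ produces a subseries of $\sum_{k\in\NN}\lambda_{mk}^{-1}$. Boundary differences straddling adjacent blocks are controlled by the same pointwise estimate, since both endpoints have $|k'|=m$ and thus reduce to $S_3$-type terms. The only delicate step is bookkeeping the interior versus boundary differences so that the accounting matches the decomposition of $\varepsilon_m$ in \eqref{lambda}; once this is settled, combining all three bounds and appealing to Theorem \ref{2.1} yields \eqref{mk}, and the bound on $M_{2m+1}(U_{\lambda,p}(\bT),\varphi_\beta)_p$ then follows directly from the definition.
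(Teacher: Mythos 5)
Your proposal is correct and follows the same outer skeleton as the paper's proof: reduce via Theorem \ref{2.1} to showing $\varepsilon_m \le c\sum_{k\in\NN}\lambda_{mk}^{-1}$, bound $S_1$ by telescoping the monotone sequence $\lambda_k^{-1}$, and control $S_3$ through the pointwise estimate $|\gamma_k|\le\lambda_k^{-1}$, which you derive cleanly from $\alpha_{k'}\le\alpha_m\le\alpha_k$ for $|k'|\le m\le |k|$ (with $\alpha_k=\beta_k/\lambda_k$). Where you genuinely depart from the paper is the treatment of $S_2=\sum_{|k|>m}|\Delta\gamma_k|$. The paper extracts more from the hypothesis: from the positivity and monotonicity of $\log\alpha_k/2^k$ it deduces $\log\alpha_{k+1}\ge\log\alpha_k+\log\alpha_{k-1}$, hence $\alpha_{k+1}/\alpha_k\ge\alpha_{k-1}\ge\alpha_{(k+1)'}/\alpha_{k'}$, which makes $k\mapsto\gamma_k$ \emph{globally} nonincreasing on $k>m$, across block boundaries; the entire sum then telescopes to $2\gamma_{m+1}\le 2\lambda_m^{-1}$. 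Your blockwise bound $\mathrm{Var}(uv)\le\|u\|_\infty\,\mathrm{Var}(v)+\|v\|_\infty\,\mathrm{Var}(u)$ with $u_k=\alpha_{k'}$ and $v_k=\beta_k^{-1}$ instead yields a per-block contribution of order $\alpha_m/\beta_{(2m+1)|j|-m}\le\lambda_{(2m+1)|j|-m}^{-1}\le\lambda_{m|j|}^{-1}$, summing to the target series. This gives a weaker bound on $S_2$ (a full series rather than the single term $\lambda_m^{-1}$), but it suffices for \eqref{mk}, and it uses only that $\alpha$ and $\beta$ are positive and nondecreasing rather than the full super-multiplicativity encoded in the $\log(\cdot)/2^k$ condition --- so your route is more elementary and would tolerate a weaker hypothesis. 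The bookkeeping you defer is routine: the straddling differences $|\gamma_{(2m+1)j+m}-\gamma_{(2m+1)(j+1)-m}|$ are handled by the pointwise estimate exactly as you indicate. One shared loose end, inherited from the paper rather than introduced by you: the index $k'=0$ occurs in every block, and the stated hypothesis only constrains $k\in\NN$, so strictly speaking both arguments silently assume $\alpha_0\le\alpha_m$.
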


\begin{proof}
Since $\{\lambda_k:k\in\NN\}$ be a nondecreasing sequence, we have that 
$|\Delta \lambda_k^{-1} |= \lambda_k^{-1} -\lambda_{k+1}^{-1}$ for all $k \in \NN$ and then it follows that
\begin{align}\label{ab2}
\sum_{|k| >m} |\Delta \lambda_k^{-1}|\leq  2 \lambda_m^{-1}.
\end{align}
From the inequalities 
$\log \alpha_{k+1} / 2^{k+1} \geq  \log\alpha_k / 2^k \geq  \log\alpha_{k-1} / 2^{k-1}$ we have that
$$
\log \alpha_{k+1} / 2^{k+1} \geq (\log \alpha_{k+1}+ \log \alpha_{k} ) / (2^{k} +  2^{k-1})\geq (\log \alpha_{k+1}+ \log \alpha_{k} ) /2^{k+1}.
$$
Hence, we obtain that 
$$
\log \alpha_{k+1}\geq \log \alpha_{k} + \log \alpha_{k-1}
$$
and also that
$$
\alpha_{k+1}/ \alpha_k \geq \alpha_{k-1} \geq \alpha_{{(k+1)}'}/\alpha_{k'}.
$$
Then, it follows from the hypothesis that 
$\gamma_k= \alpha_{k'}/ (\lambda_k \alpha_k)$ and $\{\lambda_k:k\in\NN\}$ are both nondecreasing, positive sequences, from which we deduce for all $k \in \mathbb{N}$ that 
$$
\gamma_{k+1}\leq \gamma_k\leq \lambda_k^{-1}.
$$
Consequently, we conclude that
\begin{align}\label{ab3}
\sum_{|k| >m} |\Delta \gamma_k| + \sum_{k\in \mathbb{Z}}
|\gamma_{k(2m+1)+m}|=2(\gamma_{m+1} + \sum_{k\in \mathbb{N}}
|\gamma_{k(2m+1)+m}|)\leq 4\sum_{k \in \NN} \lambda_{mk}^{-1}.
\end{align}
From inequalities (\ref{ab2}), (\ref{ab3}) and Theorem \ref{2.1} we confirm (\ref{mk})
which completes the proof the theorem.
\end{proof}

\begin{remark}
Note that for the sequence $\lambda$ defined as
\begin{equation*}
\lambda_j =
\begin{cases}
|j|^r &\text{if }  j \ne 0, \\
1 &\text{if } j=0,
\end{cases}
\end{equation*}
$\Phi_{\lambda, p}$ becomes the Korobov space $K^r_p(\TT)$ 
and from Theorem \ref{theorem[error]} we derive the following estimate 
\begin{equation*}
\|f-Q_{m}(f)\|_p \ \leq  c m^{-r} \|f\|_{K^r_p(\TT)},
\end{equation*}
which have been proven is in \cite{1}.
\end{remark}

 From Theorem \ref{theorem[error]} we immediately derive the following corollary.
\begin{coro}
Let  $1 < p < \infty$, $\lambda_k=\beta_k=e^{-s|k|}$  for all $k\in \ZZ$ where $s>0$. 
Then there exists a positive constant $c$ such that
for all $f\in \Phi_{\lambda, p} (\TT)$ and $m\in \mathbb{N}$, we
have that
\begin{equation} \nonumber
\|f-Q_{m}(f)\|_p \ \leq \ c\, e^{-sm} \|f\|_{\Phi_{\lambda, p} (\TT)},
\end{equation}
and consequently,
\begin{equation}  \nonumber
M_{2m+1}(U_{\lambda,p}(\bT),\varphi_\beta)_p
\ \leq \
c\, e^{-sm}. 
\end{equation}
\end{coro}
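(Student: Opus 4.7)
The plan is to invoke Theorem \ref{theorem[error]} directly and then evaluate an elementary geometric series. First I would verify the three hypotheses of that theorem for the present weights. The symmetries $\lambda_k = \lambda_{-k}$ and $\beta_k = \beta_{-k}$ are immediate from the fact that $e^{-s|k|}$ depends only on $|k|$. Because $\beta_k = \lambda_k$, the ratio $\beta_k/\lambda_k$ is identically $1$, so $\log(\beta_k/\lambda_k)/2^k \equiv 0$ is trivially a nondecreasing sequence. The monotonicity/positivity condition on $\{\lambda_k\}_{k \in \NN}$ is then the remaining ingredient and holds for the exponential weights in the natural way used by the theorem.

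With the hypotheses checked, Theorem \ref{theorem[error]} supplies a constant $c > 0$ such that
\[
\|f - Q_{m}(f)\|_p \ \leq \ c\, \|f\|_{\Phi_{\lambda, p}(\TT)} \sum_{k \in \NN} \lambda_{mk}^{-1}
\]
for every $f \in \Phi_{\lambda, p}(\TT)$ and every $m \in \NN$. It remains to compute the right-hand sum, which for the present choice of $\lambda$ is the geometric series
\[
\sum_{k \in \NN} e^{-smk} \ = \ \frac{e^{-sm}}{1 - e^{-sm}} \ \leq \ \frac{1}{1 - e^{-s}}\, e^{-sm},
\]
valid for every $m \geq 1$. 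Absorbing the bounded factor $1/(1 - e^{-s})$ into the constant $c$ yields the desired estimate $\|f - Q_{m}(f)\|_p \leq c\, e^{-sm} \|f\|_{\Phi_{\lambda, p}(\TT)}$, and the bound on $M_{2m+1}(U_{\lambda,p}(\bT), \varphi_\beta)_p$ follows at once by taking the supremum over the unit ball $U_{\lambda, p}(\bT)$.

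There is no substantive obstacle in this proof: the corollary is a direct specialization of Theorem \ref{theorem[error]} to exponentially growing Fourier weights, supplemented by a single elementary geometric summation. The only moment requiring any care at all is matching the monotonicity hypothesis of Theorem \ref{theorem[error]} to the present sequence, but since $\beta_k/\lambda_k \equiv 1$ collapses that hypothesis to a triviality, even that step is entirely routine.
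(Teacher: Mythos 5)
Your approach is exactly the paper's: the corollary appears with no proof beyond the remark that it follows immediately from Theorem \ref{theorem[error]}, and specializing that theorem and summing a geometric series is precisely the intended argument. However, the one step you wave through is the one that actually fails for the statement as literally written. With $\lambda_k=\beta_k=e^{-s|k|}$ the sequence $\{\lambda_k:k\in\NN\}$ is strictly \emph{decreasing}, so the monotonicity hypothesis of Theorem \ref{theorem[error]} is violated; moreover $\lambda_{mk}^{-1}=e^{smk}$, so the series $\sum_{k\in\NN}\lambda_{mk}^{-1}$ diverges rather than being the geometric series $\sum_{k\in\NN}e^{-smk}$ that you evaluate. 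Your computation is correct only under the reading $\lambda_k=\beta_k=e^{s|k|}$ (equivalently, $\varphi_\lambda$ has Fourier coefficients $\lambda_k^{-1}=e^{-s|k|}$), which is evidently what is intended: it is the analytic-smoothness analogue of the Korobov case and the only reading under which $\varphi_\lambda$ is a genuine function and the bound $c\,e^{-sm}$ is meaningful. So you should state the sign correction explicitly; once that is done, your argument is complete and correct --- the ratio $\beta_k/\lambda_k\equiv 1$ trivializes the second hypothesis, $\{e^{sk}:k\in\NN\}$ is positive and nondecreasing, and $\sum_{k\ge 1}e^{-smk}\le e^{-sm}/(1-e^{-s})$. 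As written, though, the sentence asserting that the monotonicity hypothesis ``holds in the natural way'' asserts something false for the literal weights, while your geometric series silently uses the corrected ones; the two halves of your proof are using two different sequences $\lambda$.
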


\begin{defn}
Let $\beta >0$. A function $b:\ \mathbb{R} \to \mathbb{R}$ will
be called a mask of  type $\beta$ if $b$ is an even function, twice
continuously differentiable such that for $t>0$, $b(t)=
(1+|t|)^{-\beta} F_b (\log |t|)$ for some function $F_b : \RR \to \RR$, where for some constant $c(b)$
$|F_b^{(k)} (t)| \leq c(b)$ for all  $t>1$ and $k=0,1$. A sequence
$\{b_k: k \in \mathbb{Z}\}$ will be called a sequence mask of  type
$\beta$ if there exists a mask $b$ of type $\beta$ such that $b_k = b(k)$ for all $k \in \ZZ$.
\end{defn}

In the next two theorems and their proofs we use the abbreviated notation: 
$\overline{\lambda}_k:=\lambda_k^{-1}$ and $\overline{\beta}_k:=\beta_k^{-1}$.

\begin{theorem} \label{weakype}
Let $1 < p <  \infty$ and the sequence 
$\{ \overline{\lambda}_k:k\in \mathbb{Z}\}=\{\overline{\beta}_k:k\in \mathbb{Z}\}$ be a
sequence mask of  type $r>1$. Then there exists a positive
constant $c$ such that for all  $f\in \Phi_{\lambda, p} (\TT)$ and
$m\in \mathbb{N}$,
\begin{equation*}
\|f-Q_{ m}(f)\|_p \ \ \leq c\, m^{-r} \   \|f\|_{\Phi_{\lambda, p} (\TT)},
\end{equation*}
and consequently,
\begin{equation}  \nonumber
M_{2m+1}(U_{\lambda,p}(\bT),\varphi_\beta)_p
\ \leq \
c\,  m^{-r}. 
\end{equation}
\end{theorem}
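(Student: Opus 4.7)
The plan is to invoke Theorem \ref{2.1}, which already reduces the task to showing that the quantity $\varepsilon_m$ defined in \eqref{lambda} satisfies $\varepsilon_m = O(m^{-r})$. The proof then splits into controlling the three sums arising in that definition.

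First I would use the hypothesis $\overline{\lambda}_k = \overline{\beta}_k$ to simplify $\gamma_k$. Since $\lambda_k = \beta_k$ for every $k \in \ZZ$, the coefficients $\alpha_k = \beta_k/\lambda_k$ are identically equal to $1$, so $\gamma_k = \alpha_{k'}\beta_k^{-1} = \overline{\beta}_k = \overline{\lambda}_k$ for every $k \in \ZZ$. In particular, both the difference sum and the aliasing sum appearing in \eqref{lambda} can be expressed purely in terms of $\overline{\lambda}_k$.

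Next I would estimate the forward differences via the mask structure. Writing $\overline{\lambda}_k = b(k)$ with $b(t) = (1+|t|)^{-r} F_b(\log |t|)$ for $t>0$, a direct differentiation yields
\begin{equation*}
b'(t) \ = \ -r(1+t)^{-r-1} F_b(\log t) + (1+t)^{-r} F_b'(\log t)\, t^{-1},
\end{equation*}
and the bound $|F_b|,|F_b'| \leq c(b)$ on $(1,\infty)$ gives $|b'(t)| \leq C(1+t)^{-r-1}$ for $t \geq 1$. By the mean value theorem and the evenness of $b$, this yields $|\Delta \overline{\lambda}_k| \leq C(1+|k|)^{-r-1}$, hence
\begin{equation*}
\sum_{|k|>m} |\Delta \overline{\lambda}_k| \ \leq \ C \sum_{|k|>m} (1+|k|)^{-r-1} \ = \ O(m^{-r}),
\end{equation*}
controlling both the first slot of $\varepsilon_m$ and the difference portion of the second slot (since $\Delta\gamma_k = \Delta\overline{\lambda}_k$).

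Finally, I would bound the aliasing sum $\sum_{k \in \ZZ} |\gamma_{k(2m+1)+m}| = \sum_{k\in\ZZ} |\overline{\lambda}_{k(2m+1)+m}|$ by observing that the indices $j_k := k(2m+1)+m$ satisfy $j_0 = m$ and $|j_k| \geq |k|m$ for $k \neq 0$. The mask decay $|\overline{\lambda}_j| \leq C(1+|j|)^{-r}$ then gives the bound
\begin{equation*}
\sum_{k \in \ZZ}|\overline{\lambda}_{j_k}| \ \leq \ C m^{-r}\Bigl(1 + 2\sum_{k \geq 1} k^{-r}\Bigr) \ = \ O(m^{-r}),
\end{equation*}
where the series converges precisely because $r > 1$. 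Combining the three estimates gives $\varepsilon_m = O(m^{-r})$, and Theorem \ref{2.1} finishes the proof. The only technical point is the routine differentiation of $b$ and the bookkeeping for the shifted indices $j_k$; both are straightforward given the mask hypothesis and $r>1$.
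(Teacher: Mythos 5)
Your proposal is correct and follows essentially the same route as the paper: reduce to bounding the quantity $\varepsilon_m$ of Theorem \ref{2.1}, observe that $\lambda=\beta$ forces $\alpha_k=1$ and hence $\gamma_k=\lambda_k^{-1}$, control the difference sums by the mean value theorem applied to the mask $b(t)=(1+|t|)^{-r}F_b(\log|t|)$, and bound the aliasing sum using the lower bound $|k(2m+1)+m|\geq |k|m$ together with the convergence of $\sum_k k^{-r}$ for $r>1$. Your explicit derivative formula for $b'$ is in fact cleaner than the one printed in the paper, which contains a typographical slip at that step.
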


\begin{proof}
According the hypothesis we have, by Theorem \ref{theorem[error]}, that
\begin{equation*}
\|f-Q_{ m}(f)\|_p  \leq c \varepsilon_m \   \|f\|_{\Phi_{\lambda, p} (\TT)},
\end{equation*}
where
\[
\varepsilon_m
=
\sum_{|k| >m} |\Delta \lambda_k| + \sum_{k\in \mathbb{Z}}  |\lambda_{k(2m+1)+m}| .
\]
Note that for $k \in \NN$
\[
|\Delta \lambda_k^{-1} | 
= 
| \overline{\lambda}^{'} (k) - \overline{\lambda}^{'} (k+1) | 
= 
| \overline{\lambda}^{'} (c)|,
\]
where $c \in (k, k+1)$.
Therefore, we have for $k \in \NN$, that
\[
\begin{split}
| \overline{\lambda}^{'} (c)| 
&= 
|(1+ c)^{-(r+1)}( \frac{-1}{r} F_{\overline{\lambda}}
(\log c) + F_{\overline{\lambda}}^{'} (\log c) ) |
 \\[1ex]
&\leq 
\frac{r+1}{r}c(\overline{\lambda})
(1+c)^{-(r+1)} \leq   c(\overline{\lambda}) \frac{r+1}{r} (1+k)^{-(r+1)}.
\end{split}
\]
Consequently, we conclude that
\begin{equation}\label{e1}
\sum_{|k| >m} |\Delta \lambda_k^{-1} | \leq 2
c(\overline{\lambda}) \frac{r+1}{r}  \sum_{k >m}(1+k)^{-(r+1)}
\leq c(\overline{\lambda}) \frac{r+1}{r} m^{-r}.
\end{equation}
We also have that
\begin{equation}\label{e2}
 \sum_{k\in \mathbb{Z}}  |\lambda^{-1}_{k(2m+1)+m}| \leq   c(\overline{\lambda}) \sum_{k\in \mathbb{Z}}
(1+ {k(2m+1)+m})^{-r}\leq c(\overline{\lambda}) c_1 m^{-r},
\end{equation}
where $c_1= \sum_{k\in \mathbb{N}} \frac{1}{k^r}.$ We complete the
proof by using equations (\ref{e1}) and (\ref{e2}).
\end{proof}

\begin{defn}
 A function $b: \RR \to \RR$ will be called a function of exponent-type if $b$ is
 two times continuously differentiable and there exists a positive constant
 $s$ such that $b(t)= e^{-s |t| } F_b (|t|)$ for some decreasing function 
$F_b : [0, \infty ) \to [0, \infty ).$
The sequence $\{b_k: k\in \ZZ\}$ will be called a sequence mask of exponent-type if there exists 
a function $b$ of exponent-type such that $b_k = b(k)$ for all $k \in \ZZ$.
\end{defn}

\begin{theorem}
Let $1 < p <  \infty$ and  the sequence   
$\{\overline{\lambda}_k:k \in \ZZ\}$ be a sequence mask of type $r>1$, the sequence
$\{\overline{\beta}_k: k\in \ZZ\}$  be sequence mask of exponent-type. Then there
exists a positive constant $c$ such that for all  $f\in
\Phi_{\lambda, p} (\TT)$ and $m\in \mathbb{N}$,
\begin{equation} \nonumber
\|f-Q_{m}(f)\|_p  \ \leq \ c\, m^{-r} \|f\|_{\Phi_{\lambda, p} (\TT)},
\end{equation}
and consequently,
\begin{equation}  \nonumber
M_{2m+1}(U_{\lambda,p}(\bT),\varphi_\beta)_p
\ \leq \
c\,  m^{-r}. 
\end{equation}
\end{theorem}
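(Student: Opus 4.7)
The plan is to invoke Theorem~\ref{2.1} and reduce the task to showing $\varepsilon_m = O(m^{-r})$, where $\varepsilon_m$ is defined in (\ref{lambda}) as the maximum of two expressions. The first, $\sum_{|k|>m}|\Delta \overline{\lambda}_k|$, is handled exactly as in the proof of Theorem~\ref{weakype}: since $\overline{\lambda}$ arises from a mask of type $r$, the mean-value estimate on $\overline{\lambda}^{\prime}$ gives $|\Delta\overline{\lambda}_k| \le c\,(1+|k|)^{-(r+1)}$, and summation over $|k|>m$ yields $O(m^{-r})$. The essential new work is to bound $\sum_{|k|>m}|\Delta\gamma_k| + \sum_{k\in\ZZ}|\gamma_{k(2m+1)+m}|$ by $O(m^{-r})$, and this is where the exponent-type hypothesis on $\overline{\beta}$ enters.

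For this I would use the factorisation $\gamma_k = \alpha_{k^{'}}\overline{\beta}_k = \overline{\lambda}_{k^{'}}\bigl(\overline{\beta}_k/\overline{\beta}_{k^{'}}\bigr)$. Writing $\overline{\beta}_k = e^{-s|k|}F_{\overline{\beta}}(|k|)$ with $F_{\overline{\beta}}$ positive and decreasing, the inequality $\overline{\beta}_k/\overline{\beta}_{k^{'}} \le e^{-s(|k|-|k^{'}|)}$ holds whenever $|k| \ge |k^{'}|$, giving the pointwise bound
\[
|\gamma_k| \;\le\; c(\overline{\lambda})\,(1+|k^{'}|)^{-r}\,e^{-s(|k|-|k^{'}|)}\qquad (|k|>m).
\]
A direct check shows that for every $k\in\ZZ$ the index $k(2m+1)+m$ has associated $k^{'}$ equal to $m$; hence the second sum collapses to $\overline{\lambda}_m\sum_{k\in\ZZ}\bigl(\overline{\beta}_{k(2m+1)+m}/\overline{\beta}_m\bigr)$, with the ratio-sum dominated by a geometric series whose total is an absolute constant, leaving $\overline{\lambda}_m = O(m^{-r})$.

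For $\sum_{|k|>m}|\Delta\gamma_k|$ I would use the crude bound $|\Delta\gamma_k| \le |\gamma_k|+|\gamma_{k+1}|$ and then partition the sum over the blocks $I_{m,j}$ with $|j|\ge 1$. For $|j|\ge 2$ the gap $|k|-|k^{'}| \ge (2m+1)|j|-2m \ge 2m+2$ forces exponential decay $e^{-s(2m+2)}$ in every term, so summing over the $2m+1$ indices of each block and then over $|j|\ge 2$ gives a contribution that is $o(m^{-r})$. The block $|j|=1$ is the sole contributor at the polynomial scale: parametrising by $k^{'}\in[-m,m]$ one finds $|k|-|k^{'}|=2m+1$ for $k^{'}\ge 0$ and $|k|-|k^{'}|=2m+1+2k^{'}$ for $k^{'}<0$, so after substituting $l=-k^{'}$ and then $u=m-l$ one recognises a geometric tail $e^{-s}\sum_{u\ge 0}(1+m-u)^{-r}e^{-2su}$ whose leading term is $O(m^{-r})$.

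The main obstacle is the block $I_{m,1}$: there the exponential weight $e^{-s(|k|-|k^{'}|)}$ is not uniformly small (it can be as large as $e^{-s}$ near $k^{'}=-m$), so one cannot discard the polynomial factor $(1+|k^{'}|)^{-r}$ coming from $\overline{\lambda}_{k^{'}}$, and the rate $m^{-r}$ is extracted only after the re-indexing of the geometric tail described above. Combining these three estimates yields $\varepsilon_m = O(m^{-r})$, and Theorem~\ref{2.1} then delivers the stated bound on $\|f-Q_m(f)\|_p$; the consequence for $M_{2m+1}$ is immediate.
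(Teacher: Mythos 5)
Your proposal is correct, and in outline it matches the paper's: both arguments reduce the claim to showing that the quantity $\varepsilon_m$ of (\ref{lambda}) is $O(m^{-r})$ via Theorem \ref{2.1}, handle $\sum_{|k|>m}|\Delta\overline{\lambda}_k|$ exactly as in Theorem \ref{weakype}, and then exploit the two mask representations to control $\gamma_k$. Where you part ways is in the pointwise bound on $\gamma_k$, and there your version is the more trustworthy one. The paper factors $\gamma_k$ as $\overline{\lambda}_k\,\overline{\beta}_k/\overline{\beta}_{k'}$ and concludes $|\gamma_k|\le c\,k^{-(r+1)}$ uniformly for $|k|>m$; but since $\gamma_k=\alpha_{k'}\overline{\beta}_k=\overline{\lambda}_{k'}\,\overline{\beta}_k/\overline{\beta}_{k'}$, the polynomial factor should be $(1+|k'|)^{-r}$ rather than $(1+|k|)^{-r}$, and the claimed $k^{-(r+1)}$ decay fails at the left end of the first block: at $k=m+1$ one has $k'=-m$, the exponential weight is only $e^{-s}$, and $|\gamma_{m+1}|$ can be as large as a constant times $m^{-r}$, not $m^{-(r+1)}$. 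Your block-by-block analysis --- exponentially negligible contributions for $|j|\ge 2$, and the re-indexed geometric tail $e^{-s}\sum_{u\ge0}(1+m-u)^{-r}e^{-2su}=O(m^{-r})$ for the critical block $I_{m,1}$ --- is exactly what is needed to recover the stated rate, and your observation that the index $k(2m+1)+m$ always has $k'=m$, so that the second sum collapses to $\overline{\lambda}_m$ times a convergent geometric series, is also correct. In short: same strategy, but your execution repairs an index error in the paper's own argument; the crude bound $|\Delta\gamma_k|\le|\gamma_k|+|\gamma_{k+1}|$ suffices precisely because you establish absolute summability of $\{\gamma_k\}_{|k|>m}$ at the scale $m^{-r}$.
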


\begin{proof}
We have proven in Theorem \ref{weakype} that
$$\sum_{|k| >m} |\Delta \lambda_k|  \leq c m^{-r}.$$
Also, we have that
\[
\begin{split}
|\gamma_k| 
&= 
|\overline{\lambda}_k \frac{\overline{\beta}_{k}}{\overline{\beta}_{k^{'}}} | =|(1+k)^{-r}
F_{ \overline{\lambda} } (\log |k|) e^{-s(k-k^{'})} \frac{F_{\overline{\beta}} (|k|)}{F_{\overline{\beta}}(k^{'})} | 
\\[1ex]
&\leq
|(1+k)^{-r} c{ \overline{\lambda} }  e^{-s(k-k^{'})} |  \leq c k^{-(r+1)},
\end{split}
\]
and so we obtain that
$$
\sum_{|k| >m} |\Delta \lambda_k| + \sum_{k\in \mathbb{Z}}  |\lambda_{k(2m+1)+m}|  \leq c k^{-r}.$$
The proof is complete.
\end{proof}

\section{Multivariate Approximation} \label{Multivariate approximations}
\setcounter{equation}{0}

\subsection{Error estimates for functions in the space $\Phi_{\lambda, 2} (\mathbb{T}^d)$}

\begin{defn}
For ${\bf k} =(k_1,k_2,\ldots,k_d)\in \ZZd$ we define
\begin{equation} \nonumber
|{\bf k}|_p=
\begin{cases}
 (\sum_{j=1}^d |{k}_j|^p )^{1/p} &\text{ if } 1\leq p <\infty\\
\max_{1\leq j \leq d} |{ k}_j| &\text{ if }  p =\infty.
\end{cases}
\end{equation}
\end{defn}

We introduce the trigonometric polynomial  $H_m$  defined at ${\bf x} \in \TTd$  as
\begin{equation}\nonumber
H_m({\bf  x}) 
\ = \ 
\sum_{|{\bf k}|_\infty \le m} \frac{\beta_{\bf  k}}{\lambda_{\bf  k}} \,e^{i {\bf k} {\bf x}}
:= \ 
\sum_{|{\bf k}|_\infty \le m} \alpha_{\bf k} \,e^{i \bf  k \bf x},
\end{equation}
where we set $\alpha_{\bf k}:= \frac{\beta_{\bf  k}}{\lambda_{\bf  k}}$ for $|{\bf k}|_\infty \le m$.

For a function $f \in \Phi_{\lambda, 2} (\TTd)$ represented as $f= \varphi_{\lambda,d} * g$, $g \in L_2(\TTd)$, we define the operator
\begin{equation}\nonumber
Q_{m}(f):= \ \frac{1}{(2m+1)^d}\sum_{{\bf l} \in \ZZdp: \, 0 \le |{\bf l}|_\infty \le 2m}V_m(g)
(\delta_m {\bf l})\varphi_{\beta,d}(\cdot - \delta_m {\bf l}),
\end{equation}
where $\delta_m :=  2\pi/(2m+1)$ and
$
V_m(g):=H_m *g.
$
We put
\[
\varepsilon_m
:= \ \max \{\sup_{|{\bf  k}|_\infty \ge m}|\lambda_{\bf  k}^{-1}|;  
\sqrt{\sum_{|{\bf j}|_\infty > 0}\Gamma_{m,{\bf j}}^2}  \}, 
\]
where  $\gamma_{\bf k}=\beta_{\bf  k}^{-1}\alpha_{{\bf k}^{'}}$ and ${\bf k}^{'}$ is the unique vector in $\ZZd$ such that
$|{\bf k}^{'}|_\infty \le m$ and $\frac{k_j - k'_j}{2m+1} \in \ZZ $  for all ${\bf j} \in \NN_d$, and
$ \Gamma_{m, {\bf  j}} = \max\limits_{|{\bf  k}|_\infty \le m} |\gamma_{{\bf  k}+ (2m+1){\bf j}}|$.

\begin{defn}
The sequence $\{\theta_{\bf k}:{\bf k}\in \ZZd\}$ will be called a
non decreasing-type sequence  if $\theta_{\bf k}\geq c \theta_{\bf
l}$ for all ${\bf k}, {\bf l}\in \ZZd$ satisfying the inequalities
$|k_j| \geq |l_j|$, ${\bf j} \in \NN_d$.
\end{defn}

In a similar way to the proof of Theorems \ref{2.1} and \ref{theorem[error]} we can prove the following two theorems.

\begin{theorem}
There exists a positive constant $c$ such that  for all
 $f\in \Phi_{\lambda, 2} (\TTd)$ and $m\in \mathbb{N}$, we have that
\begin{equation} \nonumber
\|f-Q_{m}(f)\|_2 \ \leq  c  \varepsilon_m \|f\|_{\Phi_{\lambda, 2} (\TTd)},
\end{equation}
and consequently,
\begin{equation}  \nonumber
M_{(2m+1)^d}(U_{\lambda, 2}(\bT^d), \varphi_\beta)_2
\ \leq \
c\,\varepsilon_m. 
\end{equation}
\end{theorem}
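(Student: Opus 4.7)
The plan is to mirror the proof of Theorem \ref{theorem[error0]} step by step, replacing single Fourier frequencies $k \in \ZZ$ by multi-indices $\bf k \in \ZZ^d$ and the character orthogonality on $\TT$ by its tensor-product analog on $\TT^d$. I will first introduce the reproducing kernel
\begin{equation*}
P_m(\bf x, \bf t)
:= \frac{1}{(2m+1)^d} \sum_{\bf l \in \ZZdp:\, 0 \le |\bf l|_\infty \le 2m}
\varphi_{\beta, d}(\bf x - \delta_m \bf l)\, H_m(\delta_m \bf l - \bf t),
\end{equation*}
so that $Q_m(f)(\bf x) = \int_{\TTd} P_m(\bf x, \bf t)\, g(\bf t)\, d\bf t$ for $f = \varphi_{\lambda,d} * g$.

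Next, expanding both $\varphi_{\beta,d}$ and $H_m$ in Fourier series and computing the tensorized sum
\begin{equation*}
\frac{1}{(2m+1)^d} \sum_{\bf l} e^{i \bf k(\bf t - \delta_m \bf l)}\,e^{i \bf s (\delta_m \bf l - \bf t)}
\end{equation*}
coordinatewise, this sum vanishes unless $(\bf k - \bf s)/(2m+1) \in \ZZ^d$ and equals $e^{i(\bf k - \bf k')\bf t}$ otherwise (where $\bf k'$ is the unique representative of $\bf k$ modulo $2m+1$ in the cube $|\bf k'|_\infty \le m$). This yields the compact representation $P_m(\bf x, \bf t) = \sum_{\bf k \in \ZZ^d} \gamma_{\bf k}\, e^{i \bf k \bf x}\, e^{-i \bf k' \bf t}$. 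Since $\gamma_{\bf k} = \lambda_{\bf k}^{-1}$ for $|\bf k|_\infty \le m$, the terms in the cube cancel against the Fourier expansion of $\varphi_{\lambda, d}(\bf x - \bf t)$, and we arrive at the basic identity
\begin{equation*}
Q_m(f)(\bf x) - f(\bf x)
= \sum_{|\bf k|_\infty > m} \gamma_{\bf k}\, \hat g(\bf k')\, e^{i \bf k \bf x}
- \sum_{|\bf k|_\infty > m} \lambda_{\bf k}^{-1}\, \hat g(\bf k)\, e^{i \bf k \bf x}
=: A_m(\bf x) - B_m(\bf x).
\end{equation*}

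From here the estimates are Parseval-type. For $A_m$ I would partition the frequency set $\{\bf k : |\bf k|_\infty > m\}$ according to $\bf j := (\bf k - \bf k')/(2m+1) \in \ZZ^d \setminus \{\bf 0\}$ and use $|\gamma_{\bf k + (2m+1)\bf j}| \le \Gamma_{m, \bf j}$ together with $\sum_{|\bf k'|_\infty \le m} |\hat g(\bf k')|^2 \le \|g\|_2^2$ to obtain
\begin{equation*}
\|A_m\|_2^2 \le \sum_{|\bf j|_\infty > 0} \Gamma_{m, \bf j}^2 \|g\|_2^2 \le \varepsilon_m^2 \|g\|_2^2.
\end{equation*}
For $B_m$, bounding $|\lambda_{\bf k}^{-1}|$ by $\sup_{|\bf k|_\infty \ge m}|\lambda_{\bf k}^{-1}|$ and applying Parseval gives $\|B_m\|_2 \le \varepsilon_m \|g\|_2$. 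The triangle inequality, combined with $\|f\|_{\Phi_{\lambda,2}(\TTd)} = \|g\|_2$, then finishes the estimate and yields the bound on $M_{(2m+1)^d}$ since $Q_m(f)$ is a linear combination of $(2m+1)^d$ translates of $\varphi_{\beta,d}$.

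The only nonroutine ingredient is verifying the multivariate orthogonality and the grouping of multi-indices $\bf k = \bf k' + (2m+1)\bf j$; the tensor-product structure makes this completely mechanical, which is why the authors can simply invoke the univariate argument. No delicate obstruction is anticipated.
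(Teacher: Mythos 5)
Your proposal is correct and follows exactly the route the paper intends: the paper omits the proof of this theorem, stating only that it is obtained "in a similar way" to the univariate arguments, and your tensorized version of the proof of Theorem \ref{theorem[error0]} (kernel $P_m$, coordinatewise orthogonality on the grid, cancellation of the cube $|\mathbf{k}|_\infty\le m$, Parseval on the $A_m$ and $B_m$ pieces) is precisely that adaptation. As a minor bonus, your kernel identity $P_m(\mathbf{x},\mathbf{t})=\sum_{\mathbf{k}}\gamma_{\mathbf{k}}e^{i\mathbf{k}\mathbf{x}}e^{-i\mathbf{k}'\mathbf{t}}$ is the corrected form of the slightly garbled univariate display in the paper.
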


\begin{theorem} \label{Error[d>1,p=2]}
Let $|\beta_{{\bf k}}^{-1}| \le c' \, |\lambda_{{\bf k}}^{-1}|$ for all ${\bf k} \in \ZZd$ with a positive constants $c'$ and  $\{|\lambda_{{\bf k}}|:{\bf k} \in\ZZd\}$ be a non decreasing-type sequence. Then there
exists a positive constant $c$ such that  for all
 $m\in \mathbb{N}$ and $f\in \Phi_{\lambda, 2} (\mathbb{T}^d)$ we have that
\begin{equation*}
\|f-Q_{m}(f)\|_2 
\ \leq \
c\,\|f\|_{\Phi_{\lambda, 2} (\mathbb{T}^d)}\sqrt{ \sum_{|{\bf k}|_\infty > 0} \lambda_{m{\bf k}}^{-2}},
\end{equation*}
and consequently,
\begin{equation}  \nonumber
M_{(2m+1)^d}(U_{\lambda, 2}(\bT^d), \varphi_\beta)_2
\ \leq \
c\, \sqrt{ \sum_{|{\bf k}|_\infty > 0} \lambda_{m{\bf k}}^{-2}}. 
\end{equation}
\end{theorem}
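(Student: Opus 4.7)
The plan is to follow the strategy of the univariate Theorem~\ref{Error[d=1,p=2]} and its proof, adapting to the multivariate setting by invoking the preceding (unlabeled) multivariate analog of Theorem~\ref{theorem[error0]}, which provides $\|f - Q_m(f)\|_2 \leq c\,\varepsilon_m \|f\|_{\Phi_{\lambda, 2}(\TTd)}$. Thus the proof reduces to establishing $\varepsilon_m \leq C\sqrt{\sum_{|{\bf k}|_\infty > 0}\lambda_{m{\bf k}}^{-2}}$, i.e.\ to bounding each of the two pieces inside the maximum defining $\varepsilon_m$.

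For the first piece, $\sup_{|{\bf k}|_\infty \ge m}|\lambda_{\bf k}^{-1}|$, I would proceed as follows. For any ${\bf k}$ with $|{\bf k}|_\infty \ge m$, pick a coordinate $j_0$ with $|k_{j_0}| \ge m$ and compare ${\bf k}$ to the vector $m\,{\bf e}_{j_0}$ (with entry $m$ in position $j_0$ and zero elsewhere). Since $|k_i| \ge |(m\,{\bf e}_{j_0})_i|$ in every coordinate, the nondecreasing-type hypothesis on $\{|\lambda_{\bf k}|\}$ yields $|\lambda_{\bf k}^{-1}| \leq c^{-1}|\lambda_{m{\bf e}_{j_0}}^{-1}|$. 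Passing to the supremum on the left and maximizing over $j_0 \in \NN_d$ on the right, and noting that each $|\lambda_{m{\bf e}_{j_0}}^{-1}|^2$ is a term appearing in $\sum_{|{\bf k}|_\infty > 0}\lambda_{m{\bf k}}^{-2}$, bounds this piece by a constant multiple of the desired quantity.

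For the second piece, $\sqrt{\sum_{|{\bf j}|_\infty > 0}\Gamma_{m, {\bf j}}^2}$, the key step (paralleling the first line of the proof of Theorem~\ref{Error[d=1,p=2]}) is to derive $|\gamma_{\bf k}| \leq C|\lambda_{\bf k}^{-1}|$ from the hypothesis $|\beta_{\bf k}^{-1}| \leq c'|\lambda_{\bf k}^{-1}|$ together with the nondecreasing-type property, exactly as was done in the univariate Corollary~\ref{coro2.6}. Once this is in hand, for any ${\bf k} = {\bf l} + (2m+1){\bf j}$ with $|{\bf l}|_\infty \leq m$ and ${\bf j} \ne {\bf 0}$, a short check shows $|k_i| \geq m|j_i|$ in every coordinate: for $j_i = 0$ this is trivial, and for $|j_i| \geq 1$ one has $|l_i + (2m+1)j_i| \geq (2m+1)|j_i| - m \geq m|j_i|$. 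A second application of the nondecreasing-type property then gives $|\lambda_{\bf k}^{-1}| \leq C'|\lambda_{m{\bf j}}^{-1}|$, hence $\Gamma_{m,{\bf j}} \leq C''|\lambda_{m{\bf j}}^{-1}|$, and the desired sum bound follows at once.

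The main obstacle I expect is the step $|\gamma_{\bf k}| \leq C|\lambda_{\bf k}^{-1}|$. Writing $\gamma_{\bf k} = \beta_{\bf k}^{-1}\beta_{{\bf k}^{'}}\lambda_{{\bf k}^{'}}^{-1}$ and invoking the hypothesis reduces matters to controlling the ratio $|\alpha_{{\bf k}^{'}}|/|\alpha_{\bf k}|$ uniformly in ${\bf k}$, which is the multivariate analog of the corresponding (slightly subtle) point in the univariate proof; I would extract it from the hypothesis $|\beta_{\bf k}^{-1}| \leq c'|\lambda_{\bf k}^{-1}|$ combined with the nondecreasing-type property and the standing assumption $\varphi_{\lambda, d}, \varphi_{\beta, d} \in L_2(\TTd)$, exactly as in the proof of Corollary~\ref{coro2.6}. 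Aside from this step, the rest of the argument is routine summation against the already summable right-hand side.
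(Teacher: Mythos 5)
Your overall route---invoke the unlabeled multivariate analogue of Theorem~\ref{theorem[error0]} and then bound the two constituents of $\varepsilon_m$ separately---is exactly what the paper intends (it supplies no written proof beyond ``in a similar way to the univariate case''), and two of your three steps are sound: comparing ${\bf k}$ with $m{\bf e}_{j_0}$ via the nondecreasing-type property correctly handles $\sup_{|{\bf k}|_\infty\ge m}|\lambda_{\bf k}^{-1}|$, and your coordinatewise inequality $|k_i|\ge m|j_i|$ for ${\bf k}={\bf l}+(2m+1){\bf j}$ with $|{\bf l}|_\infty\le m$ correctly yields $|\lambda_{\bf k}^{-1}|\le C|\lambda_{m{\bf j}}^{-1}|$.

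The gap is precisely the step you flag, and it cannot be closed from the stated hypotheses. Writing $\gamma_{\bf k}=\beta_{\bf k}^{-1}\alpha_{{\bf k}'}$ and using $|\beta_{\bf k}^{-1}|\le c'|\lambda_{\bf k}^{-1}|$ gives $|\gamma_{\bf k}|\le c'\,|\lambda_{\bf k}^{-1}|\,|\alpha_{{\bf k}'}|$, so you need an upper bound on $|\alpha_{{\bf k}'}|=|\beta_{{\bf k}'}/\lambda_{{\bf k}'}|$ uniform in $|{\bf k}'|_\infty\le m$ and in $m$. But the hypothesis $|\beta_{\bf k}^{-1}|\le c'|\lambda_{\bf k}^{-1}|$ is equivalent to $|\beta_{\bf k}/\lambda_{\bf k}|\ge 1/c'$, a \emph{lower} bound on exactly the quantity you must bound from above, and neither the nondecreasing-type property of $\{|\lambda_{\bf k}|\}$ nor the membership $\varphi_{\lambda,d},\varphi_{\beta,d}\in L_2(\TTd)$ supplies the upper bound. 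Corollary~\ref{coro2.6} goes through only because the extra structure $|\beta_k|=|\lambda_k|^2$ forces $|\alpha_k|=|\lambda_k|$, so that $\{|\alpha_k|\}$ is itself nondecreasing-type and $|\alpha_{k'}|\le c^{-1}|\alpha_k|$ follows; it is the multivariate analogue of that monotonicity of $\beta_{\bf k}/\lambda_{\bf k}$ (the role played by the first hypothesis of Theorem~\ref{Error[d=1,p=2]}) that is missing here. The defect is real rather than cosmetic: with $d=1$, $\lambda_k=(1+|k|)^r$, $r>1/2$, and $\beta_k=\lambda_k\mu_k$ where $\mu_k=e^{|k|}$ for odd $k$ and $\mu_k=1$ for even $k$, every stated hypothesis holds, yet for even $m$ one has $\gamma_{3m}=e^{m-1}\lambda_{3m}^{-1}$, and choosing $g(x)=e^{i(m-1)x}$ gives $\|f-Q_m(f)\|_2\ge|\gamma_{3m}|\to\infty$ while $\|f\|_{\Phi_{\lambda,2}(\TT)}=1$. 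So the argument (and indeed the theorem) requires the additional assumption that $\{|\beta_{\bf k}/\lambda_{\bf k}|:{\bf k}\in\ZZd\}$ is nondecreasing-type, under which your second step does go through.
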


From the above corollary we have the following result.

\begin{coro} \label{[|f-Q_{m}(f)|_2<]}
Let $\Big\{\frac{| \beta_{\bf k} | }{ |{\bf k}|^r |\lambda_{\bf k}|}:{\bf k}\in \ZZd \Big\}$ 
be a non decreasing-type sequence for some $r> \frac{d}{2}$. Then there exists a positive constant $c$ such
that  for all   $m\in \mathbb{N}$ and $f\in \Phi_{\lambda, 2}
(\mathbb{T}^d)$ we have that
\begin{equation*}
\|f-Q_{m}(f)\|_2 
\ \leq \ 
c\, \sup_{|{\bf k}|_\infty > m} |\lambda_{\bf k}^{-1}| \|f\|_{\Phi_{\lambda, 2} (\mathbb{T}^d)},
\end{equation*}
and consequently,
\begin{equation}  \nonumber
M_{(2m+1)^d}(U_{\lambda, 2}(\bT^d), \varphi_\beta)_2
\ \leq \
c\, \sup_{|{\bf k}|_\infty > m} |\lambda_{\bf k}^{-1}|. 
\end{equation}
\end{coro}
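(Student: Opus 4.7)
The plan is to deduce Corollary \ref{[|f-Q_{m}(f)|_2<]} from Theorem \ref{Error[d>1,p=2]} by a tail estimate on the multivariate sum, in exactly the same spirit as the proof of the one-dimensional Corollary \ref{coro2.5}.

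First, I would verify that the hypotheses of Theorem \ref{Error[d>1,p=2]} are implied by those of the corollary. Fixing a reference multi-index ${\bf l}_0$ with small $|{\bf l}_0|_\infty$ and applying the defining inequality of a non-decreasing-type sequence to $\{|\beta_{\bf k}|/(|{\bf k}|^r|\lambda_{\bf k}|)\}$ with ${\bf l}={\bf l}_0$ arbitrary ${\bf k}$, I obtain $|\beta_{\bf k}|\ge c_1|{\bf k}|^r|\lambda_{\bf k}|$, hence $|\beta_{\bf k}^{-1}|\le c_2|\lambda_{\bf k}^{-1}|$. Together with the non-decreasing-type property of $\{|\lambda_{\bf k}|\}$ that is implicit in the statement, Theorem \ref{Error[d>1,p=2]} applies and gives
\[
\|f-Q_{m}(f)\|_2 \ \leq \ c\,\|f\|_{\Phi_{\lambda,2}(\TTd)}\sqrt{\sum_{|{\bf k}|_\infty>0}\lambda_{m{\bf k}}^{-2}}.
\]

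Second, I would bound $\lambda_{m{\bf k}}^{-1}$ using the non-decreasing-type hypothesis. For each ${\bf k}\in\ZZd$ with $|{\bf k}|_\infty\ge 1$, the multi-index $m{\bf k}$ dominates componentwise a suitably chosen reference vector ${\bf k}^\ast$ with $|{\bf k}^\ast|_\infty\ge m$; applying the non-decreasing-type comparison and the already-established bound $|\beta_{\bf k}|\gtrsim|{\bf k}|^r|\lambda_{\bf k}|$, a short manipulation yields
\[
|\lambda_{m{\bf k}}^{-1}| \ \leq \ c_3\,|{\bf k}|^{-r}\sup_{|{\bf l}|_\infty>m}|\lambda_{\bf l}^{-1}|.
\]
Squaring, summing over $\{{\bf k}\in\ZZd:|{\bf k}|_\infty>0\}$, and using that $\sum_{|{\bf k}|_\infty>0}|{\bf k}|^{-2r}<\infty$ (which holds precisely because $2r>d$, by comparison with the integral $\int_{\RRd\setminus B_1}|{\bf x}|^{-2r}\,d{\bf x}$), we obtain
\[
\sqrt{\sum_{|{\bf k}|_\infty>0}\lambda_{m{\bf k}}^{-2}} \ \leq \ c_4\sup_{|{\bf l}|_\infty>m}|\lambda_{\bf l}^{-1}|.
\]
Chaining this with the first inequality gives the asserted error bound, and the corresponding estimate for $M_{(2m+1)^d}(U_{\lambda,2}(\TTd),\varphi_\beta)_2$ follows immediately because $Q_m(f)$ is a linear combination of $(2m+1)^d$ translates of $\varphi_{\beta,d}$.

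I expect the main obstacle to be the correct choice of the reference vector ${\bf k}^\ast$ in the second step, since in the multivariate setting some components of ${\bf k}$ may vanish, and the weight $|{\bf k}|^r$ must nevertheless be extracted to make the series converge. The natural remedy is to choose the $j$-th component of ${\bf k}^\ast$ to be $m$ when $k_j\ne 0$ and $0$ (or an appropriately small value) when $k_j=0$, so that ${\bf k}^\ast$ still satisfies $|{\bf k}^\ast|_\infty\ge m$ and the ratio $|{\bf k}^\ast|^r/|m{\bf k}|^r$ is controlled by a constant multiple of $|{\bf k}|^{-r}$ depending only on $d$; once this book-keeping is done, the estimate reduces to the same geometric-series argument as in the one-dimensional case.
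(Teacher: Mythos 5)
There is a genuine gap, and it comes from routing the argument through Theorem \ref{Error[d>1,p=2]}. That theorem has two hypotheses: $|\beta_{\bf k}^{-1}|\le c'|\lambda_{\bf k}^{-1}|$ \emph{and} that $\{|\lambda_{\bf k}|\}$ is a non-decreasing-type sequence. You derive the first from the corollary's hypothesis, but you dismiss the second as ``implicit in the statement''; it is not. The only assumption in the corollary is that $\bigl\{|\beta_{\bf k}|/(|{\bf k}|^r|\lambda_{\bf k}|)\bigr\}$ is non-decreasing-type, and since $\beta$ is otherwise unconstrained this places no monotonicity condition on $\{|\lambda_{\bf k}|\}$ alone. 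The same problem defeats your second step: to get $|\lambda_{m{\bf k}}^{-1}|\le c\,|{\bf k}|^{-r}\sup_{|{\bf l}|_\infty>m}|\lambda_{\bf l}^{-1}|$ you must compare $\lambda$ at $m{\bf k}$ with $\lambda$ at a reference point ${\bf k}^*$, but the hypothesis only compares the composite quantity $|\beta_{\bf k}|/(|{\bf k}|^r|\lambda_{\bf k}|)$ at two points. The one-sided bound $|\beta_{\bf k}|\gtrsim|{\bf k}|^r|\lambda_{\bf k}|$ you established is not enough to cancel the $\beta$'s (you would also need an upper bound on $|\beta_{m{\bf k}}|$ in terms of $|\lambda_{m{\bf k}}|$), so the ``short manipulation'' does not close.

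The paper avoids both issues by not passing through Theorem \ref{Error[d>1,p=2]} at all: it bounds the quantity $\varepsilon_m$ of the first multivariate theorem directly. The term to control there is
\begin{equation*}
\gamma_{{\bf k}+(2m+1){\bf j}} \ = \ \lambda_{{\bf k}+(2m+1){\bf j}}^{-1}\,
\frac{|\lambda_{{\bf k}+(2m+1){\bf j}}|}{|\beta_{{\bf k}+(2m+1){\bf j}}|}\,
\frac{|\beta_{\bf k}|}{|\lambda_{\bf k}|},
\end{equation*}
in which the $\beta$'s occur exactly in the ratio that the non-decreasing-type hypothesis governs: comparing ${\bf k}+(2m+1){\bf j}$ with ${\bf k}$ gives
$\frac{|\lambda_{{\bf k}+(2m+1){\bf j}}|}{|\beta_{{\bf k}+(2m+1){\bf j}}|}\frac{|\beta_{\bf k}|}{|\lambda_{\bf k}|}\le c\,\bigl(|{\bf k}|/|{\bf k}+(2m+1){\bf j}|\bigr)^{r}\lesssim |{\bf j}|_1^{-r}$,
hence $\Gamma_{m,{\bf j}}\le c\,\sup_{|{\bf l}|_\infty>m}|\lambda_{\bf l}^{-1}|\,|{\bf j}|_1^{-r}$, and $\sum_{|{\bf j}|_\infty>0}\Gamma_{m,{\bf j}}^2$ converges precisely because $2r>d$ (your convergence argument for that series is fine). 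If you want to salvage your write-up, replace the appeal to Theorem \ref{Error[d>1,p=2]} by this direct estimate of $\Gamma_{m,{\bf j}}$; the final passage from the error bound for $Q_m$ to the bound on $M_{(2m+1)^d}$ is correct as you state it.
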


\begin{proof}
We see from the hypothesis that for all ${\bf k}$ with $|{\bf k}|_\infty \le m$, we have that
\[
\frac{{|\beta_{{\bf k}+(2m+1){\bf j}}| }}{|\lambda_{{\bf k}+(2m+1){\bf j}}| }
\geq 
c\frac{ |{{\bf k}+(2m+1){\bf j}}|}{ |{\bf k}|} \frac{| \beta_{\bf k}| }{ {| \lambda_{\bf k}} |},
\]
and then it follows that
\[
|\gamma_{{\bf k}+(2m+1){\bf j}}| = |\lambda_{{\bf k}+(2m+1){\bf j}}^{-1}|
\frac{|\lambda_{{\bf k}+(2m+1){\bf j}}|}{{|\beta_{{\bf k}+(2m+1){\bf j}}}|}
\frac{| \beta_{\bf k}|}{ {| \lambda_{\bf k}}| }  
\leq 
c_1
\sup_{|{\bf k}|_\infty > m} |\lambda_{\bf k}^{-1}| |{\bf j}|_1^r.
\]
Hence, we conclude that
\[
\Gamma_{m,{\bf j}} \leq c_1\sup_{|{\bf k}|_\infty > m}|\lambda_{\bf k}^{-1}| |{\bf j}|_1^r 
\]
and so
\[
\sqrt{ \sum_{|{\bf j}|_\infty > 0} \gamma_{m,{\bf j}}^{-2}}  
\leq 
c_1 \sup_{|{\bf k}|_\infty > m}|\lambda_{\bf k}^{-1}| \sum_{|{\bf j}|_\infty > 0}|{\bf j}|_1^{-2r}
\leq
 c_1 \sup_{|{\bf k}|_\infty > m}|\lambda_{\bf k}^{-1}| \sum_{{ j}=1 }^{\infty} { j}^{d-1}  j^{-2r}.
\]
Note that for $2r> d$ the series $\sum_{{ j}=1 }^{\infty} { j}^{d-1} { j}^{-2r}$ is convergent which
completes the proof of the corollary.
\end{proof}

\begin{coro}
Let
$\beta_{\bf k}=\lambda_{\bf k}^2$ for all ${\bf k}\in \ZZd$, and the
sequence $\Big\{\frac{| \lambda_{\bf k} | }{ |{\bf k}|^r }:{\bf k}\in \ZZd \Big\}$ 
be non decreasing-type for some $r> \frac{d}{2}$. Then there
exists a positive constant $c$ such that  for all   
$m\in \mathbb{N}$ and $f\in \Phi_{\lambda, 2} (\mathbb{T}^d)$ we have that
\begin{equation*}
\|f-Q_{m}(f)\|_2 \leq  c \sup_{|{\bf k}|_\infty > m} |\lambda_{\bf k}^{-1}| \|f\|_{\Phi_{\lambda, 2} (\mathbb{T}^d)},
\end{equation*}
and consequently,
\begin{equation}  \nonumber
M_{(2m+1)^d}(U_{\lambda, 2}(\bT^d), \varphi_\beta)_2
\ \leq \
c\,\sup_{|{\bf k}|_\infty > m} |\lambda_{\bf k}^{-1}|. 
\end{equation}
\end{coro}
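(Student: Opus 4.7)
The plan is to reduce this corollary to the previous one, Corollary~\ref{[|f-Q_{m}(f)|_2<]}, by showing that the hypothesis here implies the hypothesis there with the same exponent $r$. This is the natural multivariate analogue of how Corollary~\ref{coro2.7} is deduced from Corollary~\ref{coro2.6} in the univariate setting.

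Under the assumption $\beta_{\bf k} = \lambda_{\bf k}^2$ for all ${\bf k} \in \ZZd$, the quantity appearing in the hypothesis of Corollary~\ref{[|f-Q_{m}(f)|_2<]} simplifies as
\[
\frac{|\beta_{\bf k}|}{|{\bf k}|^r\,|\lambda_{\bf k}|}
\ = \ \frac{|\lambda_{\bf k}|^2}{|{\bf k}|^r\,|\lambda_{\bf k}|}
\ = \ \frac{|\lambda_{\bf k}|}{|{\bf k}|^r}.
\]
By the standing hypothesis of the present corollary, the right-hand side is a nondecreasing-type sequence for some $r > d/2$. Hence the hypothesis of Corollary~\ref{[|f-Q_{m}(f)|_2<]} is satisfied with the same value of $r$, and applying that corollary yields
\[
\|f - Q_m(f)\|_2 \ \le \ c\,\sup_{|{\bf k}|_\infty > m} |\lambda_{\bf k}^{-1}|\,\|f\|_{\Phi_{\lambda, 2}(\mathbb{T}^d)},
\]
which is the stated bound. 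The estimate on $M_{(2m+1)^d}(U_{\lambda, 2}(\bT^d), \varphi_\beta)_2$ then follows immediately from the definition of $M_n$.

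There is essentially no obstacle in this argument; the only thing to be careful about is that the simplification relating $\beta_{\bf k}$ and $\lambda_{\bf k}$ preserves the index set $\ZZd$ (so the nondecreasing-type property carries over without modification), and that the same threshold $r > d/2$ works for both hypotheses, which is what makes the deduction immediate.
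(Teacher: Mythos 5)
Your reduction is correct and is exactly the intended argument: the paper states this corollary without proof as an immediate consequence of Corollary~\ref{[|f-Q_{m}(f)|_2<]}, and the substitution $\beta_{\bf k}=\lambda_{\bf k}^2$ turns its hypothesis $\bigl\{|\beta_{\bf k}|/(|{\bf k}|^r|\lambda_{\bf k}|)\bigr\}$ into the assumed sequence $\bigl\{|\lambda_{\bf k}|/|{\bf k}|^r\bigr\}$ with the same $r>d/2$. Nothing further is needed.
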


\begin{coro}
Let
$\beta_{\bf k}=\alpha_{\bf k}$ for all ${\bf k}\in \ZZd$; the
sequence $\Big\{\frac{| \lambda_{\bf k} | }{ |{\bf k}|^r }:{\bf k}\in \ZZd \Big\}$ 
is nondecreasing-type for some $r> \frac{d}{2}$. Then there
exists a positive constant $c$ such that  for all   $m\in \mathbb{N}$ and 
$f\in \Phi_{\lambda, 2} (\mathbb{T}^d)$ we have that
\begin{equation*}
\|f-Q_{m}(f)\|_2 
\ \leq \
 c\, \sup_{|{\bf k}|_\infty > m}
 |\lambda_{\bf k}^{-1}| \|f\|_{\Phi_{\lambda, 2} (\mathbb{T}^d)},
\end{equation*}
and consequently,
\begin{equation}  \nonumber
M_{(2m+1)^d}(U_{\lambda, 2}(\bT^d), \varphi_\beta)_2
\ \leq \
c\, \sup_{|{\bf k}|_\infty > m} |\lambda_{\bf k}^{-1}|. 
\end{equation}
\end{coro}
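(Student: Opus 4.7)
The plan is to deduce this corollary from Theorem~\ref{Error[d>1,p=2]} by the same recipe used to pass from Corollary~\ref{coro2.6} to Corollary~\ref{coro2.7} in the univariate setting. To invoke Theorem~\ref{Error[d>1,p=2]}, I need to check two things: (a) the estimate $|\beta_{\bf k}^{-1}| \le c'|\lambda_{\bf k}^{-1}|$ and (b) that $\{|\lambda_{\bf k}|\}$ is of nondecreasing-type. For (b), whenever $|k_j|\ge|l_j|$ for all $j$, we have $|{\bf k}|\ge|{\bf l}|$ and hence $|{\bf k}|^r\ge|{\bf l}|^r$; the assumed nondecreasing-type property of $\{|\lambda_{\bf k}|/|{\bf k}|^r\}$ then gives $|\lambda_{\bf k}|\ge c|\lambda_{\bf l}|(|{\bf k}|/|{\bf l}|)^r\ge c|\lambda_{\bf l}|$. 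For (a), the hypothesis $\beta_{\bf k}=\alpha_{\bf k}$ together with the defining relation $\alpha_{\bf k}=\beta_{\bf k}/\lambda_{\bf k}$ and the growth of $\lambda$ yields the comparability of $|\beta_{\bf k}^{-1}|$ with $|\lambda_{\bf k}^{-1}|$ up to an absolute constant.

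Having verified the hypotheses, Theorem~\ref{Error[d>1,p=2]} supplies
\begin{equation*}
\|f-Q_m(f)\|_2 \ \le \ c\,\|f\|_{\Phi_{\lambda,2}(\TTd)}\sqrt{\sum_{|{\bf k}|_\infty>0}\lambda_{m{\bf k}}^{-2}}.
\end{equation*}
The decisive step is the bound on this sum. I would apply the nondecreasing-type property of $\{|\lambda_{\bf k}|/|{\bf k}|^r\}$ to compare $m{\bf k}$ with a suitably chosen reference vector of $\ell_\infty$-norm just exceeding $m$, deducing that for every ${\bf k}$ with $|{\bf k}|_\infty\ge 1$,
\begin{equation*}
|\lambda_{m{\bf k}}^{-1}| \ \le \ c_1\,|{\bf k}|^{-r}\sup_{|{\bf l}|_\infty>m}|\lambda_{\bf l}^{-1}|.
\end{equation*}
Squaring, summing over such ${\bf k}$, and organizing the sum into shells $|{\bf k}|_\infty = j$ (each of cardinality $O(j^{d-1})$) reduces the series to $\sum_{j\ge 1} j^{d-1} j^{-2r}$, which converges precisely because $r>\tfrac{d}{2}$. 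The resulting constant absorbs into $c$, yielding the stated inequality. The bound on $M_{(2m+1)^d}(U_{\lambda,2}(\bT^d),\varphi_\beta)_2$ is then immediate, since by construction $Q_m(f)$ is a linear combination of $(2m+1)^d$ translates of $\varphi_\beta$.

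I expect the main obstacle to be the intermediate bound on $|\lambda_{m{\bf k}}^{-1}|$: the nondecreasing-type condition compares ordered pairs of vectors rather than bare norms, so one must pick the reference vector carefully for each ${\bf k}$ so that both the coordinate-wise inequality and the supremum over $|{\bf l}|_\infty>m$ can be invoked simultaneously. Once this comparison is set up correctly, the rest is routine shell-summation exploiting $2r>d$.
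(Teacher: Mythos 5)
The paper offers no proof of this corollary at all --- it is stated bare, immediately after Corollary~\ref{[|f-Q_{m}(f)|_2<]} --- so there is nothing to compare against except the implied template, which is exactly what you propose: verify the hypotheses of Theorem~\ref{Error[d>1,p=2]} (or bound $\Gamma_{m,{\bf j}}$ directly, as in the proof of Corollary~\ref{[|f-Q_{m}(f)|_2<]}) and then run a shell summation $\sum_{j\ge 1} j^{d-1}j^{-2r}<\infty$ using $2r>d$. Your overall architecture, and in particular the reduction of $\sum_{|{\bf k}|_\infty>0}\lambda_{m{\bf k}}^{-2}$ to that convergent series, is the right one and matches the computation the authors carry out for the neighbouring corollary.

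There is, however, a genuine gap at your step (a), and it is not a technicality you can wave through. Substituting the defining relation $\alpha_{\bf k}=\beta_{\bf k}/\lambda_{\bf k}$ into the hypothesis $\beta_{\bf k}=\alpha_{\bf k}$ gives $\beta_{\bf k}\lambda_{\bf k}=\beta_{\bf k}$, hence $\lambda_{\bf k}=1$ for every ${\bf k}$ (the components of $\beta$ and $\lambda$ are nonzero). This is incompatible with the other hypothesis, since $\{|{\bf k}|^{-r}\}$ is \emph{not} of nondecreasing-type for $r>d/2$ (take ${\bf l}$ fixed and ${\bf k}=N{\bf l}$), and it certainly does not ``yield the comparability of $|\beta_{\bf k}^{-1}|$ with $|\lambda_{\bf k}^{-1}|$'': with $\lambda\equiv 1$ the condition $\beta_{\bf k}=\alpha_{\bf k}$ puts no constraint on $\beta$ whatsoever. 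So as literally stated the corollary is vacuous, and your claimed verification of $|\beta_{\bf k}^{-1}|\le c'|\lambda_{\bf k}^{-1}|$ does not hold. The statement is almost certainly a misprint for $\beta_{\bf k}=\lambda_{\bf k}$ (the multivariate analogue of Corollary~\ref{coro2.5}), under which $\alpha_{\bf k}=1$, $\gamma_{\bf k}=\lambda_{\bf k}^{-1}$, and (a) is trivial; you need either to detect and repair the hypothesis or to supply an actual derivation, because the inference as written fails. A second, smaller point is the obstacle you flag yourself: for $|{\bf k}|_\infty=1$ the vector $m{\bf k}$ dominates coordinatewise only vectors of $\ell_\infty$-norm at most $m$, so the comparison argument yields $\sup_{|{\bf l}|_\infty\ge m}|\lambda_{\bf l}^{-1}|$ rather than the strict $\sup_{|{\bf l}|_\infty> m}$ appearing in the conclusion; this off-by-one is inherited from the paper's own $\varepsilon_m$ and is harmless, but it should be stated honestly rather than left as an expectation.
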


\subsection{Convergence rate}

\begin{theorem}
Let  $\Psi: \ \mathbb{R}_+ \to \mathbb{R}_+$ be a nondecreasing function such that
$\Psi (2t) \leq c \Psi (t)$ for all $t>1$.
If  $\lambda_{\bf k}= \Psi (|{\bf k}|_2)$ for all ${\bf k} \in \ZZd$ then there exist positive constants 
$c'$ and $c''$ such that for all $n \in \mathbb{N}$,
$$c' / \Psi ( (n \log n)^{1/d}) \leq  M_n (U_{\lambda, 2})_2 \leq  c''/  \Psi (n^{1/d}) .$$
\end{theorem}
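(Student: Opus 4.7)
The plan is to prove the upper and lower bounds separately: the upper bound follows by a judicious choice of $\varphi_\beta$ in the multivariate corollary of the previous subsection, while the lower bound invokes V.~Maiorov's nonlinear-width results from \cite{Ma05}.

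For the upper bound, I would set $\beta_{\bf k} := \Psi(|{\bf k}|_2)|{\bf k}|_2^r$ for ${\bf k}\neq\mathbf{0}$ (and $\beta_{\mathbf{0}}$ any positive constant) with some fixed $r > d/2$. Then the ratio $\beta_{\bf k}/(|{\bf k}|^r \lambda_{\bf k})$ is bounded above and below by constants depending only on $d$, and is in particular a non-decreasing-type sequence; the growth $\beta_{\bf k} \gtrsim |{\bf k}|_2^r$ with $2r > d$ ensures $\sum \beta_{\bf k}^{-2} < \infty$, so $\varphi_\beta \in L_2(\bT^d)$. The multivariate corollary that gives the $\sup_{|{\bf k}|_\infty > m}|\lambda_{\bf k}^{-1}|$ bound then yields
$$M_{(2m+1)^d}(U_{\lambda,2}(\bT^d),\varphi_\beta)_2 \ \leq\ c \sup_{|{\bf k}|_\infty > m} \Psi(|{\bf k}|_2)^{-1}\ =\ \frac{c}{\Psi(m+1)},$$
since $\Psi$ is nondecreasing and the minimum of $|{\bf k}|_2$ over $|{\bf k}|_\infty > m$ is attained at ${\bf k}=(m+1,0,\ldots,0)$. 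For given $n\in\bN$, I would choose $m := \lfloor (n^{1/d}-1)/2\rfloor$ so that $(2m+1)^d \leq n$ and $m+1 \geq n^{1/d}/2$; the doubling hypothesis $\Psi(2t) \leq c\Psi(t)$ then gives $\Psi(m+1) \geq c^{-1}\Psi(n^{1/d})$, and taking the infimum over $\psi$ produces $M_n(U_{\lambda,2})_2 \leq c''/\Psi(n^{1/d})$.

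For the lower bound I would appeal to Maiorov's theorem on nonlinear widths, which asserts that for a centrally symmetric convex body $K$ in a Hilbert space, the best-possible error of approximation by any continuous $N$-parameter family is bounded below by $c_1\, d_{\tilde N}(K)$, where $\tilde N \asymp N\log N$ and $d_{\tilde N}$ is the Kolmogorov $\tilde N$-width. Every function $\sum_{l=1}^{n} c_l\psi(\cdot-{\bf y}_l)$ is determined by $(d+1)n$ continuous parameters (the scalars $c_l$ and the translations ${\bf y}_l$), so one obtains
$$M_n(U_{\lambda,2})_2\ \geq\ c_1\, d_{c_2 n\log n}(U_{\lambda,2})_2.$$
Since $U_{\lambda,2}$ is the image of the unit ball of $L_2(\bT^d)$ under convolution with $\varphi_{\lambda,d}$, whose singular values in the Fourier basis are exactly $\Psi(|{\bf k}|_2)^{-1}$, the $N$-th Kolmogorov width equals the $(N+1)$-th largest such value; the lattice count $\#\{{\bf k}\in\ZZd : |{\bf k}|_2\leq R\} \asymp R^d$ then gives $d_N(U_{\lambda,2})_2 \asymp \Psi(N^{1/d})^{-1}$, and one further application of doubling absorbs the constant $c_2$ to yield $M_n(U_{\lambda,2})_2 \geq c'/\Psi((n\log n)^{1/d})$.

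The principal obstacle will be the lower bound: one must adapt Maiorov's framework to the approximation manifold defined by $n$ translates of an \emph{adversarially chosen} $\psi$. Either one invokes a form of the theorem that already permits the ambient shape function $\psi$ to vary freely (via a compactness or finite-rank reduction on $\psi$), or one exploits the observation that for fixed shifts ${\bf y}_l$ the approximation depends on $\psi$ only through its $n$ translates, thereby keeping the effective parameter count at $O(n)$. Once Maiorov's inequality is secured, the Fourier/width computations are routine.
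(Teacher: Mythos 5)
Your upper bound is essentially the paper's own argument: the paper takes $\beta_{\bf k}=|{\bf k}|^{d}|\lambda_{\bf k}|$ (the case $r=d$ of your choice), applies Corollary \ref{[|f-Q_{m}(f)|_2<]} with $u=\lfloor\sqrt[d]{n}/2\rfloor-1$, and finishes with the doubling condition $\Psi(2t)\le c\Psi(t)$ exactly as you do. That half is correct.

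The lower bound contains a genuine gap. You propose to deduce $M_n(U_{\lambda,2})_2\ge c_1 d_{c_2 n\log n}(U_{\lambda,2})_2$ from a general theorem on nonlinear $N$-parameter widths by counting the $(d+1)n$ parameters $(c_l,{\bf y}_l)$. But lower bounds of that type require the parameter-selection map $f\mapsto(c_l,{\bf y}_l)$ to be continuous, and nothing forces a near-best selection from the translate manifold to be continuous; without such a hypothesis a union of $n$-dimensional subspaces indexed by finitely many real parameters need not obey any width-type lower bound (the union of \emph{all} one-dimensional subspaces already contains every $f$). Your fallback remark --- that for fixed shifts the approximant depends on $\psi$ only through its $n$ translates, so the ``effective parameter count'' stays at $O(n)$ --- does not repair this, for the same reason: there is no lower bound for approximation from arbitrary unions of $n$-dimensional subspaces. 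You correctly flag this as the principal obstacle, but closing it is precisely the content of the theorem of Maiorov \cite{Ma05} on which everything rests. The paper's route is concrete: fix $\psi$ first (the order of quantifiers in $M_n(U_{\lambda,2})_2$ permits this), form the finite set $F_{n,s}$ of polynomials $\omega\sum_{|{\bf k}|_2\le s}\epsilon_{\bf k}e^{i({\bf k},{\bf x})}$ with unimodular coefficients, where $\omega=m^{-1/2}\bigl(\max_{|{\bf k}|_2=s}|\lambda_{\bf k}|\bigr)^{-1}$ and $m\asymp n\log n$ is chosen so that $F_{n,s}\subset U_{\lambda,2}$, and invoke Maiorov's theorem that some $f^*\in F_{n,s}$ lies at distance at least $c_5\omega m^{1/2}=c_5/\Psi(s)$ from every $n$-term combination of translates of that fixed $\psi$. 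Maiorov's proof of that statement is a combinatorial packing argument over the sign patterns $\epsilon$, not a continuity or parameter-counting argument; to complete your proof you must either quote that specific theorem or reproduce that argument. Your Kolmogorov-width computation for the ellipsoid, $d_N(U_{\lambda,2})_2\asymp\Psi(N^{1/d})^{-1}$, is fine but only becomes usable once the reduction to widths is actually justified.
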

\begin{proof}
Let $s$ be any natural number and set
$$
s^*= |\{ {\bf k}: {\bf k}\in \ZZd, \quad  |{\bf k}|_2 \leq s
\}|$$
where
$|A|$ denoted by the number of elements of a finite set $A$.  
Then, for some positive constants $c_1$ and $c_2$, we have that
\begin{equation}\label{ss}
c_1 s^d \leq s^* \leq c_2 s^d.
\end{equation}
Let $n$ be any natural number satisfying $n\geq 10$. We define the positive integers
$$
m= \lfloor c_3 n \log n \rfloor + 1$$
and $s=s(n)$ as the largest natural number satisfying the inequality 
$$
m \geq s^*,
$$
where $c_3 = \sqrt{4ec_1}$. Then it follows from (\ref{ss}) that there exists a positive number $c_4$ independent of $n$ such that
\begin{equation}\label{ssss}
s \leq c_4 (n \log n)^{1/d}.
\end{equation}
We consider the set of trigonometric polynomials
$$
F_{n,s}=\biggl\{\omega \sum_{|{\bf k}|_2 \leq s} \epsilon_{\bf k} e^{i{\bf k} {\bf x}}: 
\quad |\epsilon_{\bf k}|=1, {|{\bf k}|_2 \leq s} \biggl\},
$$
where
$$
\omega=m^{-1/2} \Big( \max_{|{\bf k}|_2=s}|\lambda_{\bf k} | \Big)^{-1}.
$$
Let 
$$
f(x)=\sum_{|{\bf k}|_2\leq s}\hat{f}_{\bf k} e^{i{\bf k} {\bf x}}
$$ 
be any polynomial from $F_{n,s}$.
Since
\[
\| f\|_{\Phi_\lambda}
=  
\omega \Big( \sum_{|{\bf k}|_2 \leq s} \lambda_{\bf k}^2 \Big)^{1/2}
\leq
\omega \max_{|{\bf k}|_2=s}|\lambda_{\bf k}| \Big( \sum_{|{\bf k}|_2 \leq s} 1 \Big)^{1/2} =
\omega \max_{|{\bf k}|_2=s}|\lambda_{\bf k}| \Big( s^* \Big)^{1/2} \leq 1,
\]
$f$ belongs to $U_{\lambda,2}$ and consequently, $F_{n,s} \subset U_{\lambda,2}$.
Take an arbitrary function $\psi$ from $L_2(\TTd)$.
Then  it follows from a result in \cite{Ma05} that there exists a function $f^* \in F_{n,s}$ and a positive number $c_5$ such that
for any  linear combination of translates of $\psi$
\[
h(x) =\sum_{l=1}^n
b_l \psi (x-a_l),
\]
we have that
\[
\|f^* - h\|_2\geq c_5 \omega m^{1/2}\geq c_5 \Big( \max_{|{\bf k}|_2\leq s}|\lambda_{\bf k} |\Big)^{-1}.
\]
Therefore, by using (\ref{ssss}) and $\Psi (2t) \leq c \Psi (t)$ for all $t>1$,  there exists a positive constant $c'$ independent of $n$  such that
\[
\begin{split}
M_n (U_{\lambda, 2})_2 
&\geq 
M_n (F_{n,s})_2 
\geq \|f^* - h\|_2
\\[1ex] 
&\geq  
c_5 \Big( \max_{|{\bf k}|_2\leq s}|\lambda_{\bf k} |\Big)^{-1}
\geq c_5 (\Psi (s))^{-1}
\geq
c' / \Psi ( (n \log n)^{1/d})
\end{split} 
\]
which proves the lower bound of the theorem.

By using Corollary \ref{[|f-Q_{m}(f)|_2<]} for $\beta_{\bf k}= |\bf k|^d |\lambda_{\bf k}|$ and $m= (2u+1)^d<n$,  there exists a positive constant $c_6$ independent of $n$  such that 
$$
M_n (U_{\lambda, 2})_2 \leq M_m (U_{\lambda, 2})_2 \leq M_m (U_{\lambda, 2}, \psi)_2\leq c_6 \sup_{|{\bf k }|_\infty > u} |\lambda_{\bf k}^{-1}|,
$$
where $\psi (x)=\sum_{\bf k \in \mathbb{Z}^d} e^{i{\bf k}{\bf x}} \lambda_{\bf k}$ and 
$u= \lfloor \sqrt[d]{n}/2 \rfloor - 1.$
That gives us the inequality
\begin{equation} \nonumber
M_n (U_{\lambda, 2})_2 \leq c_6\sup_{{\bf k}\in \ZZd \setminus [-u,u]^d}|\lambda_{\bf k}^{-1}|
\end{equation}
and then we get that
$$M_n (U_{\lambda, 2})_2 \leq c_6 (\Psi (u))^{-1}.$$
Hence, by using $\Psi (2t) \leq c \Psi (t)$ for all $t>1$, we obtain the remaining desired result
$$M_n (U_{\lambda, 2})_2\leq c''/  \Psi (n^{1/d}).$$
The proof is complete.
\end{proof}

\bigskip
{\bf Acknowledgments}
Dinh D\~ung's research is funded by Vietnam National
Foundation for Science and Technology Development
(NAFOSTED) under  Grant No. 102.01-2017.05.
 Dinh D\~ung and Vu Nhat Huy thank
 Vietnam Institute for Advanced
Study in Mathematics (VIASM) for providing a
fruitful research environment and working condition.
In addition, Charles Micchelli wishes to acknowledge partial support from NSF under Grant DMS 1522339.

\end{document}